\theoremstyle{plain}
\newtheorem{theorem}[equation]{Theorem}
\newtheorem{proposition}[equation]{Proposition}
\newtheorem{lemma}[equation]{Lemma}
\newtheorem{corollary}[equation]{Corollary}
\newtheorem{definition}[equation]{Definition}
\theoremstyle{remark}
\newtheorem{remark}[equation]{Remark}
\numberwithin{equation}{section}
\newcommand{\dbar}{\bar \partial}
\newcommand{\cb}{{\mathcal B}}
\newcommand{\sd}{{\mathscr D}}
\newcommand{\ve}{\varepsilon}
\newcommand{\C}{{\mathbb C}}
\newcommand{\D}{{\mathbb D}}
\newcommand{\h}{{\mathbb H}}
\newcommand{\R}{{\mathbb R}}
\newcommand{\Z}{{\mathbb Z}}
\begin{document}

\title[$\dbar$ on Hartogs]{A solution operator for $\dbar$ on the Hartogs triangle and $L^p$ estimates}
\author{L. Chen \& J. D. McNeal}
\subjclass[2010]{32W05, 32A26, 32A07}
\begin{abstract} An integral solution operator for $\dbar$ is constructed on product domains that include the punctured bidisc. This operator is shown to satisfy $L^p$ estimates for all $1\leq p <\infty$, though with non-standard -- relative to strongly pseudoconvex
domains -- bounding term.
These estimates imply $L^p$ estimates for $\dbar$ on the Hartogs triangle, with greater range of $p$ than the canonical solution satisfies.
 \end{abstract}
\address{Department of Mathematics, \newline The Ohio State University, Columbus, Ohio, USA}
\email{chen.1690@osu.edu}
\address{Department of Mathematics, \newline The Ohio State University, Columbus, Ohio, USA}
\email{mcneal@math.ohio-state.edu}

\maketitle 


\section*{Introduction}\label{S:intro}

Let $\Omega$ be a domain in $\C^n$. If $\alpha$ is a $\dbar$-closed $(0,1)$-form on $\Omega$, consider
solutions to the Cauchy-Riemann system $\dbar v=\alpha$. A fundamental problem is to determine whether particular solutions
satisfying norms estimates exist for various norms. Such solutions lead to construction of non-trivial holomorphic functions
on $\Omega$.

Solving $\dbar$ with estimates depends both on the geometry of $\Omega$ and the norms considered. In this paper
results on two classes of domains are established -- product domains, especially with non-smooth factors, and the Hartogs triangle -- with estimates in $L^p$ norms,
$1 \le p<\infty$. Obtaining $L^p$ estimates for $\dbar$ on the Hartogs triangle motivated our investigation and is achieved in Theorem \ref{HarLp'}. However this result is
proved by transferring the $\dbar$ problem to a 2-dimensional product domain, so $L^p$ estimates for $\dbar$ on product spaces are established first. The results on product domains are 
new and of independent interest. That such estimates were not previously established is unusual, given the success of integral formulas on domains with more complicated
geometry. The study of $\dbar$ on product spaces accounts for most of the paper's length.

A successful method for obtaining non-$L^2$ estimates on $\dbar$ starts by establishing integral representation formulas with holomorphic kernels
for forms.\footnote{Or almost holomorphic kernels; see \cite{Range13} for a result in this direction.}
This approach was inaugurated by \cite{Henkin69} and \cite{GrauertLieb}
on strongly pseudo convex domains and was intensely pursued in the two decades after  \cite{Henkin69}, \cite{GrauertLieb}; see \cite{Kerzman71}, \cite{Lieb70}, and \cite{Ovrelid}
for some early foundational results. There are many significant results in this direction, too numerous to survey; see \cite{Range86} and \cite{HenkinLeiterer} for references to
the main results prior to 1985.

Integral formulas follow from a general procedure, the Cauchy-Fantappi\' e method, once a {\it generating form} is constructed; see \cite{Range86}, \cite{RangeSiu}, and \cite{LanSte13}.
However Cauchy-Fantappi\' e integral formulas have almost exclusively been derived for domains with smooth boundary (plus additional, restrictive geometric conditions) because Stokes' theorem is freely applied during the construction.
Two notable exceptions are \cite{RangeSiu}, on strongly pseudoconvex domains with piecewise smooth boundary, and \cite{Henkin71}, on analytic polyhedra.

For a product domain, the boundary is not smooth nor strongly pseudoconvex away from its boundary singularities. So while many techniques used below are well-known, modifications of the ``standard recipe'' are also required to establish our integral formulas. The first goal is to obtain the abstract integral formula \eqref{T1} on a 2-dimensional product domain with smoothly bounded factors; the derivation crucially uses an
idea from  \cite{RangeSiu}. A formula for products with higher-dimensional factors is also obtained, see Remark \ref{R:productSolution}.
The 2-dimensional formula is then converted into an explicit solution operator using the Cauchy generating form, when the data is sufficiently smooth:

\begin{proposition}\label{I:main} Suppose $D_1, D_2\subset\C$ are domains with $C^1$ boundary. If $f\in C^1_{0,1}\left(\overline{D_1\times D_2}\right)$ satisfies $\dbar f=0$, the function
\begin{equation}
\label{I:derivativeT1}
\begin{split}
T(f)
&=\frac{-1}{2 \pi i}\int_{D_2}\frac{f_2(z_1,\zeta_2)}{\zeta_2-z_2}d\bar \zeta_2\wedge d\zeta_2+\frac{-1}{2 \pi i}\int_{D_1}\frac{f_1(\zeta_1,z_2)}{\zeta_1-z_1}d\bar \zeta_1\wedge d\zeta_1\\
&+\frac{-1}{(2 \pi i)^2}\int_{D_1 \times D_2}\frac{\sd(f)(\zeta_1,\zeta_2)}{(\zeta_1-z_1)(\zeta_2-z_2)}\,d\bar \zeta_1\wedge d\zeta_1\wedge d\bar \zeta_2\wedge d\zeta_2
\end{split}
\end{equation}
solves $\dbar (Tf) =f$. In \eqref{I:derivativeT1}, $\sd f =\frac{\partial f_1}{\partial \bar z_2}=\frac{\partial f_2}{\partial \bar z_1}$. 
\end{proposition}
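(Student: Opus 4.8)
The plan is to verify the two scalar identities $\partial(Tf)/\partial\bar z_1=f_1$ and $\partial(Tf)/\partial\bar z_2=f_2$ on $D_1\times D_2$ directly, by differentiating \eqref{I:derivativeT1} term by term; the only analytic input needed is the Cauchy--Pompeiu formula in one complex variable. Write $\mathcal{C}_{D_j}$ for the solid Cauchy transform in the $j$-th variable, so e.g.\ $\mathcal{C}_{D_1}[g](z_1)=\tfrac{-1}{2\pi i}\int_{D_1}\tfrac{g(\zeta_1)}{\zeta_1-z_1}\,d\bar\zeta_1\wedge d\zeta_1$, and recall that for $g\in C^1(\overline{D_j})$ one has $\tfrac{\partial}{\partial\bar z_j}\mathcal{C}_{D_j}[g]=g$ on $D_j$, and, more precisely, that Cauchy--Pompeiu gives $\mathcal{C}_{D_j}[\partial_{\bar\zeta_j}g]=g-P_{D_j}g$, where $P_{D_j}g$ is the Cauchy boundary integral of $g$ over $\partial D_j$ and is therefore holomorphic in the $j$-th variable on $D_j$. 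With this notation the first two terms of $T(f)$ are $\mathcal{C}_{D_2}[f_2(z_1,\cdot)](z_2)$ and $\mathcal{C}_{D_1}[f_1(\cdot,z_2)](z_1)$, and, since $\tfrac{-1}{(2\pi i)^2}=-\bigl(\tfrac{-1}{2\pi i}\bigr)^2$, the third term is the iterated transform $-\mathcal{C}_{D_1}\mathcal{C}_{D_2}[\sd f]=-\mathcal{C}_{D_2}\mathcal{C}_{D_1}[\sd f]$ (the two one-variable transforms commute by Fubini). One could instead deduce $\dbar(Tf)=f$ from the abstract representation formula once the Cauchy generating form is inserted; the argument below is self-contained.

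For $\partial(Tf)/\partial\bar z_1$: the kernel $(\zeta_2-z_2)^{-1}$ is integrable and independent of $z_1$, so differentiating the first term under the integral and using $\dbar f=0$ (i.e.\ $\partial_{\bar z_1}f_2=\sd f$) gives $\tfrac{\partial}{\partial\bar z_1}\mathcal{C}_{D_2}[f_2(z_1,\cdot)](z_2)=\mathcal{C}_{D_2}[\sd f(z_1,\cdot)](z_2)$, while the second term contributes $\tfrac{\partial}{\partial\bar z_1}\mathcal{C}_{D_1}[f_1(\cdot,z_2)](z_1)=f_1(z_1,z_2)$ by Cauchy--Pompeiu in $z_1$. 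For the third term I would first rewrite its inner $\zeta_2$-integral: since $\sd f=\partial_{\bar\zeta_2}f_1$, Cauchy--Pompeiu in $\zeta_2$ yields $\mathcal{C}_{D_2}[\sd f(\zeta_1,\cdot)](z_2)=f_1(\zeta_1,z_2)-(P_{D_2}f_1)(\zeta_1,z_2)$, which is $C^1$ in $\zeta_1$ (this is the crucial point: $\sd f$ itself is only continuous when $f\in C^1$). Hence the $\bar z_1$-derivative of the third term equals $-\tfrac{\partial}{\partial\bar z_1}\mathcal{C}_{D_1}\bigl[f_1(\cdot,z_2)-(P_{D_2}f_1)(\cdot,z_2)\bigr](z_1)=-\bigl(f_1(z_1,z_2)-(P_{D_2}f_1)(z_1,z_2)\bigr)=-\mathcal{C}_{D_2}[\sd f(z_1,\cdot)](z_2)$, which cancels the contribution of the first term and leaves $\partial(Tf)/\partial\bar z_1=f_1$. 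The identity $\partial(Tf)/\partial\bar z_2=f_2$ follows by the same computation with the indices $1$ and $2$ interchanged and $\sd f$ read as $\partial_{\bar\zeta_1}f_2$: there the first term contributes $f_2(z_1,z_2)$, the second contributes $\mathcal{C}_{D_1}[\sd f(\cdot,z_2)](z_1)$, and the third contributes $-\mathcal{C}_{D_1}[\sd f(\cdot,z_2)](z_1)$, which again sum to $f_2$. Together these give $\dbar(Tf)=f$.

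The single delicate point, and the only real obstacle, is regularity bookkeeping: since $f$ is merely $C^1$, the mixed derivative $\sd f$ is only continuous, so one cannot differentiate the double integral under the integral sign and invoke $\tfrac{\partial}{\partial\bar z_j}\mathcal{C}_{D_j}=\mathrm{id}$ without further justification. It is exactly the Cauchy--Pompeiu rewriting of the inner one-variable integral -- which trades the $\dbar$ falling on $f$ for a boundary integral that is holomorphic in one of the variables -- that both restores the $C^1$ regularity needed to differentiate and makes the cancellation with the first term transparent; an alternative would be to establish the identity first for $\dbar$-closed $f\in C^\infty(\overline{D_1\times D_2})$ and then approximate, but this requires checking that the approximants may be chosen $\dbar$-closed and that $T$ is continuous in the appropriate topology, so the Cauchy--Pompeiu route is cleaner. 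Everything else -- Fubini for the double integral, differentiation under the integral against the integrable kernels $|\zeta_j-z_j|^{-1}$, and the validity of Stokes' theorem (hence of Cauchy--Pompeiu with its boundary term) on the $C^1$-bounded factors $D_j$ -- is routine; throughout I take the $D_j$ bounded, so that all integrals converge absolutely.
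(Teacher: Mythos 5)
Your proof is correct, but it takes a genuinely different route from the paper's. The paper derives \eqref{I:derivativeT1} top-down from the Cauchy--Fantappi\'e formalism: it builds the Range--Siu partial convex combination $W$ of generating forms on the product, invokes the fact (cited from Range--Siu, not re-proved) that the associated operator $T_1^W$ in \eqref{T1} solves $\dbar u=f$, and then reduces $T_1^W$ to the displayed formula by inserting the Cauchy generating forms and applying Stokes' theorem in $\zeta_1$ on $D_1\setminus D(z_1;\tau)$ and letting $\tau\to 0^+$; the ``solves $\dbar$'' assertion is thus inherited from the abstract machinery. You instead verify $\partial(Tf)/\partial\bar z_j=f_j$ directly from the explicit formula using only the one-variable Cauchy--Pompeiu identity. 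Your sign conventions, the Fubini commutation of the two solid Cauchy transforms, and the cancellation scheme all check out, and you correctly isolate the one delicate point: since $\sd f$ is merely continuous, one must first rewrite the inner transform as $\mathcal{C}_{D_2}[\sd f(\zeta_1,\cdot)](z_2)=f_1(\zeta_1,z_2)-(P_{D_2}f_1)(\zeta_1,z_2)$, which restores $C^1$ dependence on $\zeta_1$ and makes the cancellation with the first term exact. What the paper's route buys is the identification of $T$ with the Cauchy--Fantappi\'e operator $T_1^W$, which it exploits elsewhere (e.g.\ the comparison with Henkin's boundary-integral formula in Remark \ref{T1forDxA}); what your route buys is a self-contained, elementary proof of the proposition that does not rest on the externally cited solution-operator property of $T_1^W$.
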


This is proved as Proposition \ref{P:strong} below. Even in the case of the bidisc, i.e., when $D_j=\D=\left\{z\in\C: |z|<1\right\}$, 
Proposition \ref{I:main} is new. Since the kernels in the integrands are Cauchy kernels, or iterated Cauchy kernels, mapping properties of the operator $T$ are easy to derive.
Additionally, a re-expression of \eqref{I:derivativeT1} -- see Remark \ref{T1forDxA} -- corrects a minor error in a formula displayed on page 212 of \cite{Henkin71},  given without proof, and reproduced in \cite{FornaessLeeZhang}. (The error is inconsequential for the estimates proved in  \cite{FornaessLeeZhang}.)

 The solution operator \eqref{I:derivativeT1} is extended to non-$C^1$ bounded domains -- including $\D\times\D^*$, where $\D^*=\left\{ 0 <|z| <1\right\}$ is the punctured disc -- and to forms not necessarily smooth
 up to the boundary in Section \ref{SS:punctured}. The $L^p$ boundedness of the integral operators is also proved in Section \ref{LpofT}. In contrast to results on strongly pseudoconvex domains, non-standard $L^p$ boundedness of the data is needed to obtain an $L^p$ bound on the solution. Define the norm $\|f\|_{\cb}:=\|f_1\|_{L^p(D_1\times D_2)}+\|f_2\|_{L^p(D_1\times D_2)}+\|\sd(f)\|_{L^p(D_1\times D_2)}$ on $(0,1)$-forms $f=f_1d\bar z_1+ f_2 d\bar z_2$, see Definition \ref{D:banach}.
 The main $L^p$ result on product domains, Theorem \ref{Lpwithdbarf}, says the ordinary $L^p$ norm of $Tf$ is dominated by  $\|f\|_{\cb}$. 
 
 In Section \ref{S:necessity}, the condition $f\in L^p\left(D_1\times D_2\right)$ alone is shown not to be sufficient to conclude $Tf\in L^p$. More dramatically, the example there shows that $Tf$ can fail to {\it exist} for
 $f\in L^1\left(D_1\times D_2\right)$, or more generally for $f\in L^p$, $1\leq p<2$. This contrasts sharply with results on the Henkin-Cauchy-Fantappi\' e operator known on strongly pseudoconvex domains \cite{Kerzman71}, \cite{Ovrelid},
 some finite type domains \cite{ChaNagSte}, \cite{FefKohn88}, and even some infinite type domains \cite{HaKhaRai}. The contrast is interesting and should be understood more fully.  The observation in Section \ref{S:necessity}
 merely inaugurates this new phenomena; finding actual necessary conditions on $f$ that follow from $Tf$ existing or satisfying $L^p$ estimates remains open. Such conditions, beyond $f\in L^p\left(D_1\times D_2\right)$, obviously have consequences when using the estimates in application. To be clear: our computations in Section \ref{S:necessity} are made only on the Henkin solution operator.
 
The main previous result on $\dbar$-estimates for the bidisc are the $L^\infty$ estimates in  \cite{FornaessLeeZhang}. There is a point connecting  \cite{FornaessLeeZhang}, the undetermined necessary conditions mentioned above, and the older literature on the Henkin solution. Norm control of derivatives of the data $f$ is assumed in \cite{FornaessLeeZhang}, though somewhat obliquely. In that paper the estimate 
$\|Tf\|_{L^\infty(\D\times\D)} \leq\, C\|f\|_{L^\infty(\D\times\D)}$ is proved, but only under the assumption that $f\in C^1_{0,1}\left(\overline{\D\times\D}\right)$. Thus the question posed by Kerzman in 1971 -- does there exist a solution operator satisfying $\|Sf\|_{L^\infty}\leq C \|f\|_{L^\infty}$ for {\it all} $\dbar$-closed forms in $L^\infty$, \cite{Kerzman71} remark on pages 311--312 -- is still unresolved on the bidisc.

 Finally in Section \ref{S:Hartogs}, the 
 biholomorphism between the Hartogs triangle $\h$ and $\D\times\D^*$ transfers the $\dbar$ problem on $\h$ to a $\dbar$ problem on $\D\times\D^*$ with different data. $L^p$ estimates for a solution operator on $\h$ are then inferred from those on $\D\times\D^*$, cf. Theorem \ref{HarLp'}. There are previous results about solving $\dbar$ with estimates in  H\" older spaces
 $C^{k,\alpha}\left(\h\right)$, including the degenerate case $L^\infty\left(\h\right)$. See \cite{ChaumatChollet91},  \cite{ChaumatChollet93}, and \cite{MaMichel}. In these papers, a reduction of extending the $\dbar$ data to supersets of $\h$ is allowed by the H\" older norms and used essentially. This reduction does not occur for $L^p$ data.

Our analysis has a surprising consequence: solution operators for $\dbar$ on $\h$ exist that are better behaved than the canonical solution operator in terms of $L^p$ boundedness. Recent results, \cite{EdhMcN16}, \cite{ChaZey16}, and \cite{Chen17}
show the Bergman projection on $\h$ is only $L^p$ bounded for $p\in\left(\frac 43, 4\right)$. The canonical solution operator inherits this limited $L^p$ boundedness. However our solution operator for $\dbar$ on $\h$ is $L^p$ bounded for all $1\leq p<\infty$, at least on a subclass of forms. See Sections \ref{SS:example} and \ref{SS:extra} for details. The only other situation we know where the canonical solution operator is demonstrably not the best solution operator for $\dbar$ is the Diederich-Fornaess worm domain $W$, for estimates in the $C^k\left(\overline{W}\right)$ scale of norms, c.f. \cite{Christ96} and \cite{Kohn73}.

The authors thank Dror Varolin for an insightful comment about section \ref{SS:product}. The authors are also grateful to the anonymous referee, who pointed out an error in an earlier version of Section \ref{S:necessity} and suggested several expositional improvements.


\section{The Cauchy-Fantappi\'{e} Formalism}\label{cfformalism}

In this section the Cauchy-Fantappi\'{e} formalism is reviewed and applied to product spaces. For more information, see \cite{Range86,RangeSiu}.

\subsection{Domain with $C^1$ boundary}
\label{C1domain}

Let $D$ be a domain in $\C^n$ with $C^1$ boundary $bD$. Let $U$ be an open neighborhood of $bD\times\bar D$ and $U^*=\{(\zeta,z)\in U | \zeta\ne z\}$.

\begin{definition}\label{D:generateC1domain}
A generating form $w$ on $bD\times D$ is a $C_{1,0}^1$-form in $\zeta$ and a $C^{\infty}$ function in $z$,
\[
w(\zeta,z)=\sum_{l=1}^n w^l(\zeta,z)\,d\zeta_l,
\]
with the following property
\[
\langle w(\zeta,z),\zeta-z\rangle = \sum_{l=1}^n w^l(\zeta,z)(\zeta_l-z_l)=1\qquad\text{for}\,\,\,(\zeta,z)\in U^*.
\]
\end{definition}

When applying a differential operator to forms depending on multiple sets of independent variables (like $w$), subscripts will be used to indicate which variables are differentiated. For instance, 
$\dbar_z w(\zeta, z) =\sum_{k,l=1} \frac{\partial w^l}{\partial\bar z_k} d\bar z_k\wedge d\zeta_l$, while $\partial_\zeta w(\zeta, z) =\sum_{k,l=1} \frac{\partial w^l}{\partial\zeta_k} d\zeta_k\wedge d\zeta_l$.
The same convention is used on functions.

The universal form
\[
w_0(\zeta,z)=\frac{\partial_{\zeta} (|\zeta-z|^2)}{|\zeta-z|^2}=\frac{\sum_{l=1}^n(\bar\zeta_l-\bar z_l)\,d\zeta_l}{|\zeta-z|^2}
\]
is called the Bochner-Martinelli generating form. This form satisfies Definition \ref{D:generateC1domain} for any domain $D$. Let $\mu\in I=[0,1]$. If $w$ is a generating form on $bD\times D$, define the homotopy between $w$ and $w_0$ by
\begin{equation}\label{E:homotopy}
\hat w(\zeta,z,\mu)=\mu w(\zeta,z)+(1-\mu) w_0(\zeta,z).
\end{equation}
Note that for each fixed $\mu\in I$, the form $\hat w$ also satisfies Definition \ref{D:generateC1domain}.

A piece of notation simplifies writing formulas below: let $\bar \partial_{\zeta,\mu}=\bar \partial_{\zeta}+d_{\mu}$.

\begin{definition}
\label{CFkernel}
The Cauchy-Fantappi\'{e} kernel of order $q$ generated by $\hat w$ is 
\begin{equation}\label{E:CFkernel1}
\Omega_q(\hat w)=\frac{(-1)^{q(q-1)/2}}{(2\pi i)^n}\left( \begin{array}{c} n-1 \\ q \end{array} \right) \hat w \wedge \left( \bar \partial_{\zeta,\mu} \hat w \right)^{n-q-1} \wedge \left( \bar \partial_{z} \hat w \right)^q
\end{equation}
for $0 \le q \le n-1$, and $0$ otherwise ($q=-1$ and $q=n$).
\end{definition}
\smallskip

\begin{remark}\label{R:1}
The kernel $\Omega_q(w)$ associated to an arbitrary generating form $w$ is defined in the same manner:
 \begin{equation}\label{E:CFkernel2}
\Omega_q(w)=\frac{(-1)^{q(q-1)/2}}{(2\pi i)^n}\left( \begin{array}{c} n-1 \\ q \end{array} \right)  w \wedge \left( \bar \partial_{\zeta} w \right)^{n-q-1} \wedge \left( \bar \partial_{z}  w \right)^q
\end{equation}
Note that $\bar\partial_{\zeta,\mu}$ has been replaced by $\bar\partial_{\zeta}$. Moreover, if $q\ge1$ and $w$ is holomorphic in $z$, $\Omega_q(w)=0$ because of the final factor in \eqref{E:CFkernel2}.
\end{remark}
\smallskip

\begin{remark}\label{R:2}
The kernel $\Omega_q(w_0)$ is also denoted $K_q$ and called the Bochner-Martinelli-Koppelman kernel, following \cite{Range86}. For $D\subset\subset\C^n$ with piecewise $C^1$ boundary,  the Bochner-Martinelli-Koppelman representation for $f\in C_{0,q}^1(\bar D)$ is
\begin{equation}\label{E:BMK}
f(z)=\int_{bD}f\wedge K_q(\cdot,z)-\int_D\bar\partial f\wedge K_q(\cdot,z)-\bar \partial_z\int_Df\wedge K_{q-1}(\cdot,z)
\end{equation}
where $0\le q\le n$. It holds that $\int_Df\wedge K_{q-1}(\cdot,z)\in C_{0,q-1}^1(D)$.  See \cite[Chap. IV, Theorem 1.10]{Range86} for proofs of these facts.
\end{remark}

A more general representation formula than \eqref{E:BMK} uses the following ingredient:

\begin{definition}\label{D:solutionop}
Let $D\subset\subset\C^n$ be a domain with $C^1$ boundary and $w$ a generating form on $bD\times D$.
For $1 \le q \le n$, define the integral operator
\[
T_{q}^{w}: C_{0,q}(\bar D) \to C_{0,q-1}(D)
\]
by
\[
T_{q}^{w}(f)=\int_{bD \times I}f \wedge \Omega_{q-1}(\hat w)-\int_D f \wedge K_{q-1}.
\]
Set $T_0^w=T_{n+1}^w \equiv 0$.
\end{definition}

The following theorem is proved in \cite[Chap. IV, Theorem 3.6]{Range86}.

\begin{theorem}
Let $D\subset\subset\C^n$ be a domain with $C^1$ boundary and $w$ a generating form on $bD\times D$.
For $0 \le q \le n$ and $f \in C_{0,q}^1(\bar D)$, 
\begin{equation}\label{E:SolutionOp1}
f=\int_{bD}f \wedge \Omega_q(w) + \bar \partial T_q^w(f) + T_{q+1}^w(\bar \partial f)\qquad\text{on}\,\,\, D.
\end{equation}

Moreover, for $k=0,1,2,\dots,\infty$, if $f \in C_{0,q}^k(D) \cap C_{0,q}(\bar D)$ then $T_q^w(f) \in C_{0,q-1}^k(D)$.\\
\end{theorem}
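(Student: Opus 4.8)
The plan is to deduce \eqref{E:SolutionOp1} from the Bochner--Martinelli--Koppelman identity \eqref{E:BMK} by interpolating between the given generating form $w$ and the universal form $w_0$ through the homotopy $\hat w$ of \eqref{E:homotopy}. Observe first that \eqref{E:BMK} is exactly the case $w=w_0$ of \eqref{E:SolutionOp1}: one has $\Omega_q(w_0)=K_q$, and since the homotopy degenerates (is independent of $\mu$) when $w=w_0$, the fiber integral over $I$ carries no $d\mu$ and hence vanishes, so $T_q^{w_0}(f)=-\int_D f\wedge K_{q-1}$; substituting this into \eqref{E:SolutionOp1} recovers \eqref{E:BMK}. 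For a general generating form $w$ the discrepancy between $\Omega_q(w)$ and $K_q$ will be transferred to an integral over $bD\times I$ by Stokes' theorem.

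Next I would establish the differential identity that drives the argument. Because $\hat w(\cdot,\cdot,\mu)$ satisfies Definition \ref{D:generateC1domain} for every fixed $\mu$, the relation $\langle\hat w,\zeta-z\rangle\equiv 1$ on $U^*$ forces, upon applying $\dbar_{\zeta,\mu}$ and $\dbar_z$, the contractions $\langle\dbar_{\zeta,\mu}\hat w,\zeta-z\rangle=0$ and $\langle\dbar_z\hat w,\zeta-z\rangle=0$. Feeding these into \eqref{E:CFkernel1} and computing $\dbar_{\zeta,\mu}$ of $\Omega_q(\hat w)$, the terms telescope (the binomial coefficients in \eqref{E:CFkernel1} are precisely what makes this happen) and one obtains, away from the diagonal, $\dbar_{\zeta,\mu}\Omega_q(\hat w)=-\dbar_z\Omega_{q-1}(\hat w)$ — equivalently, the total kernel $\sum_q\Omega_q(\hat w)$ is closed under $\dbar_\zeta+d_\mu+\dbar_z$ off the diagonal. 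I would then apply $\dbar_z$ to $T_q^w(f)$: on the boundary piece, $\dbar_z\int_{bD\times I}f\wedge\Omega_{q-1}(\hat w)=-\int_{bD\times I}f\wedge\dbar_{\zeta,\mu}\Omega_q(\hat w)$, and Stokes' theorem on the manifold--with--corners $bD\times I$ (here $bD$ is a closed $C^1$ manifold, so $b(bD\times I)=bD\times\{1\}-bD\times\{0\}$, with no contribution from $b(bD)$) splits this into the $\dbar f$-term $\int_{bD\times I}\dbar f\wedge\Omega_q(\hat w)$ together with the $\mu=1$ and $\mu=0$ restrictions, which are $\int_{bD}f\wedge\Omega_q(w)$ and $\int_{bD}f\wedge K_q$. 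On the volume piece, $-\dbar_z\int_D f\wedge K_{q-1}$ is evaluated directly from \eqref{E:BMK} as $f-\int_{bD}f\wedge K_q+\int_D\dbar f\wedge K_q$. Adding the two contributions, the $\int_{bD}f\wedge K_q$ terms cancel, the $\dbar f$-terms combine into $-T^w_{q+1}(\dbar f)$, and what remains is $f-\int_{bD}f\wedge\Omega_q(w)$, which is \eqref{E:SolutionOp1}. (Since $w$ and $bD$ are only $C^1$, Stokes is legitimized by first approximating by smooth data — retaining the generating property $\langle w,\zeta-z\rangle=1$ — and passing to the limit in the continuous kernels.)

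For the regularity assertion, write $T_q^w(f)=\int_{bD\times I}f\wedge\Omega_{q-1}(\hat w)-\int_D f\wedge K_{q-1}$. In the first integral $\zeta$ ranges over $bD$ while $z$ ranges over the interior $D$, so $\zeta\neq z$ throughout and the kernel $\Omega_{q-1}(\hat w)(\zeta,z,\mu)$ is $C^\infty$ in $z$; differentiation under the integral sign shows this term lies in $C^\infty_{0,q-1}(D)$ and costs nothing. The second term is the Bochner--Martinelli operator, and the claim that $f\in C^k_{0,q}(D)\cap C_{0,q}(\bar D)$ implies $\int_D f\wedge K_{q-1}\in C^k_{0,q-1}(D)$ rests on the structure of $K_{q-1}$: its coefficients are, up to constants, first $z$-derivatives of the Newtonian kernel $|\zeta-z|^{2-2n}$, which is weakly singular and hence locally integrable on $D\subset\R^{2n}$. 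Applying $\partial_{z_j}$ to $\int_D f\wedge K_{q-1}$ and using $\partial_{z_j}=-\partial_{\zeta_j}$ on the relevant kernel factors, an integration by parts converts each derivative into a boundary integral over $bD$ (smooth in $z\in D$) plus a volume integral of the same weakly singular type against $\partial_\zeta f$; iterating $k$ times yields the assertion (cf. \cite[Chap.~IV]{Range86} and the facts recorded in Remark \ref{R:2}).

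The main obstacle is the sign and combinatorial bookkeeping in the second paragraph: tracking the factors $(-1)^{q(q-1)/2}$ and the binomial coefficients through $\dbar_{\zeta,\mu}\Omega_q(\hat w)$, and fixing the orientation of $b(bD\times I)$, so that the telescoping identity holds exactly and the boundary contributions at $\mu=0,1$ land with the signs needed to cancel against \eqref{E:BMK}. Everything else is either quoted (the formula \eqref{E:BMK} and the $C^k$-mapping property of the Bochner--Martinelli operator) or routine (differentiation under the integral on the nonsingular boundary piece, and the smooth approximation needed to apply Stokes' theorem with merely $C^1$ data).
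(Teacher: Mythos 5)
The paper does not actually prove this theorem; it is quoted from \cite[Chap.\ IV, Theorem 3.6]{Range86}, and your outline reconstructs precisely the argument given there: the Koppelman-type identity $\bar\partial_{\zeta,\mu}\Omega_q(\hat w)+\bar\partial_z\Omega_{q-1}(\hat w)=0$ off the diagonal (which is exactly what the normalizing constants in \eqref{E:CFkernel1} are designed to produce), Stokes' theorem on $bD\times I$ to generate the $\mu=1$ and $\mu=0$ faces, and cancellation of the $\mu=0$ face against the Bochner--Martinelli--Koppelman formula \eqref{E:BMK}. The outline is sound, and your consistency check that $T_q^{w_0}(f)=-\int_D f\wedge K_{q-1}$ by dimension--degree counting is the right sanity test. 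The one step stated too loosely is the interior regularity of $\int_D f\wedge K_{q-1}$: integrating by parts ``into a boundary integral over $bD$'' presupposes $f\in C^1(\bar D)$, whereas the hypothesis is only $f\in C^k_{0,q}(D)\cap C_{0,q}(\bar D)$; the standard repair is to split the integral with a cutoff supported near $z_0$ (the far piece is automatically $C^\infty$ in $z$) and perform the $\partial_z=-\partial_\zeta$ exchange and integration by parts only on the compactly supported near piece, where $f\in C^k$ is available.
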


\begin{remark}\label{R:3}
Suppose $q\geq1$. If the generating form $w$ is holomorphic in $z$ and $f \in C_{0,q}^1(\bar D)$ is $\bar \partial$-closed, \eqref{E:SolutionOp1} implies $u=T_{q}^w(f)$ solves
\[
\bar \partial u=f,
\]
since $\Omega_q(w)=0$ as noted in Remark \ref{R:1}
\end{remark}

\subsection{Product domains}\label{SS:product} An idea from \cite{RangeSiu} is used to construct a generating form on a product domain from known generating forms on the factors. In \cite{RangeSiu}, only domains with piecewise smooth boundaries that are {\it strongly pseudoconvex} away from boundary singularities are considered. However, strong pseudoconvexity is only used to build the integral kernels on
smooth pieces of the boundary, not to piece the kernels together to get a solution operator for $\dbar$. This latter idea is what we extract.

Let $D=D_1 \times D_2\subset\C^n$, where $D_1\subset\C^{n_1}$ and $D_2\subset\C^{n_2}$ are domains with $C^1$ boundary. Let $S_1=bD_1\times\bar D_2$ and $S_2=\bar D_1\times bD_2$.

\begin{definition}
\label{D:generateproductdomain}
For $j=1,2$, a generating form $w_j(\zeta,z)$ on $S_j\times D$ is a $(1,0)$-form in $\zeta$
\[
w_j(\zeta,z)=\sum_{l=1}^n w_j^l(\zeta,z)\,d\zeta_l
\]
with the following properties
\begin{enumerate}
\item for each fixed $\zeta\in S_j$, $w_j^l(\zeta,\cdot)$ is $C^{\infty}$ in $D$,
\item for each fixed $z\in D$, $w_j^l(\cdot,z)$ is $C^1$ in a neighborhood $U_j^z$ of $S_j$, and
\item for each $z\in D$
\begin{equation}
\label{geniden}
\langle w_j(\zeta,z),\zeta-z\rangle = \sum_{l=1}^n w_j^l(\zeta,z)(\zeta_l-z_l)=1\qquad \text{for all} \,\,\,\zeta\in U_j^{z}.
\end{equation}
\end{enumerate}
\end{definition}

\begin{remark}\label{R:generating}
The forms in Definition \ref{D:generateproductdomain} are generating for only part of $bD$, namely $S_j$.
To connect  this with the previous definition, suppose $\tilde w_j$ is a generating form on $bD_j\times D_j$ as in Definition \ref{D:generateC1domain}, for $j=1,2$. Define
\[
w_1(\zeta,z)=\sum_{l=1}^{n_1}\tilde w_1^l(\zeta^1,z^1)\,d\zeta^1_l
\]
and
\[
w_2(\zeta,z)=\sum_{l=1}^{n_2}\tilde w_2^{l}(\zeta^2,z^2)\,d\zeta^2_l
\]
where $\zeta=\left(\zeta^1,\zeta^2\right)\in\C^{n_1}\times\C^{n_2}$ and $z=\left(z^1,z^2\right)\in\C^{n_1}\times\C^{n_2}$. Note that $w_1, w_2$ are independent of $\left(\zeta^2, z^2\right)$, $\left(\zeta^1, z^1\right)$ respectively.
Elementary algebra shows that $w_1$ and $w_2$ are generating forms on $S_1\times D$ and $S_2\times D$, respectively, as given by Definition \ref{D:generateproductdomain}. 
\end{remark}

Following  \cite{RangeSiu}, let
\[
\Delta=\{\lambda=(\lambda_0,\lambda_1,\lambda_2)\in\R^{3}\,|\,\lambda_0,\lambda_1,\lambda_2\ge0,\lambda_0+\lambda_1+\lambda_2=1\},
\]
\[
\Delta_{0}=\{\lambda\in\Delta\,|\,\lambda_1=\lambda_2=0\},
\]
\[
\Delta_{01}=\{\lambda\in\Delta\,|\,\lambda_2=0\},
\]
\[
\Delta_{02}=\{\lambda\in\Delta\,|\,\lambda_1=0\}.
\]
As before, let
\[
w_0(\zeta,z)=\frac{\partial_{\zeta} (|\zeta-z|^2)}{|\zeta-z|^2}=\frac{\sum_{l=1}^n(\bar\zeta_l-\bar z_l)\,d\zeta_l}{|\zeta-z|^2}.
\]
Consider the partial convex combination of $w_0$, $w_1$, and $w_2$
\begin{equation}
\label{cvxcmb}
W(\zeta,z,\lambda)=\sum_{j=0}^{2}\lambda_j w_j(\zeta,z),
\end{equation}
defined only on the following sets
\begin{enumerate}
\item $(\zeta,z,\lambda)\in\bar D\times D\times\Delta_0$ with $\zeta\neq z$,
\item $(\zeta,z,\lambda)\in S_1\times D\times\Delta_{01}$,
\item $(\zeta,z,\lambda)\in S_2\times D\times\Delta_{02}$,
\item and $(\zeta,z,\lambda)\in (bD_1\times bD_2)\times D\times\Delta$.
\end{enumerate}
\smallskip

\begin{remark}
For $\zeta$ fixed, the form $W$ in \eqref{cvxcmb} is $C^{\infty}$ for $z\in D$. For $z\in D$ fixed, $W$ is $C^1$ in $\zeta$ and satisfies \eqref{geniden} in the corresponding neighborhood depending on $z$. Also, the form $W$ is differentiable in $\lambda$ in the interiors of $\Delta$, $\Delta_{01}$, and $\Delta_{02}$.
\end{remark}

When derivatives with respect to the vector $\lambda$ are written, the meaning is that derivatives with respect to  $\lambda_0, \lambda_1, \lambda_2$ are taken and the results are added. Notationally
\[
\bar \partial_{\zeta,\lambda}=\bar \partial_{\zeta}+d_{\lambda_0}+d_{\lambda_1}+d_{\lambda_2}.
\]

Similar to Definition \ref{CFkernel}, a kernel is associated to the form $W$ in \eqref{cvxcmb}:

\begin{definition}
The Cauchy-Fantappi\'{e} kernel of order $q$ generated by the form $W$ in \eqref{cvxcmb} is defined 
\[
\Omega_q(W)=\frac{(-1)^{q(q-1)/2}}{(2\pi i)^n}\left( \begin{array}{c} n-1 \\ q \end{array} \right) W \wedge \left( \bar \partial_{\zeta,\lambda} W \right)^{n-q-1} \wedge \left( \bar \partial_{z} W \right)^q
\]
for $0 \le q \le n-1$, and $0$ otherwise ($q=-1$ and $q=n$).
\end{definition}

\begin{remark}\label{R:CF}
If $q\ge1$ and if for $j=1,2$ $w_j$ is holomorphic in $z$ for $\zeta$ fixed, then $\Omega_q(W)=0$ on the set where $\lambda_0=0$. This follows since $\lambda_0=0$ implies that $W$ defined by \eqref{cvxcmb} is holomorphic in $z$.

On the other hand, note that none of the sets (1)-(4) in \eqref{cvxcmb} allow $\lambda_0=0$. Nevertheless, $\Omega_q(w)=0$ on this set is needed in order to show that the operator in Definition \ref{D:solution} below is solution operator for $\dbar$; for a proof see \cite[\S(2.5)]{RangeSiu}.
\end{remark}

Parallel to \S\ref{C1domain}, an integral operator associated to the form $W$ in \eqref{cvxcmb} is defined.

\begin{definition}\label{D:solution}
For $1 \le q \le n$, define the integral operator
\begin{align*}
T_{q}^{W}: C_{0,q}(\bar D) \to C_{0,q-1}(D)
\end{align*}
by
\begin{align*}
T_{q}^{W}(f)=-\int_{bD_1\times bD_2\times \Delta}f\wedge\Omega_{q-1}(W)+\int_{S_1\times\Delta_{01}}f\wedge\Omega_{q-1}(W)&+\int_{S_2\times\Delta_{02}}f\wedge\Omega_{q-1}(W)\\ &-\int_{D\times\Delta_0} f \wedge\Omega_{q-1}(W).
\end{align*}
Set $T_0^W=T_{n+1}^W \equiv 0$.
\end{definition}

\begin{remark}\label{R:productSolution}
For $1\le q\le n$, if $w_j$ is holomorphic in $z$ for $j=1,2$, then $T_q^W$ is a solution operator to the $\bar \partial$-equations; i.e. $u=T^W_q(f)$ solves
\[
\bar \partial u=f
\]
when $\bar\partial f=0$. This follows from Stokes' theorem, but non-trivially as the different dimensional facets of the simplex $\Delta$ must be handled. As for Remark \ref{R:CF}, a detailed proof is given in \cite[\S(2.5)]{RangeSiu}.
\end{remark}

\begin{remark}
Since $\lambda_0+\lambda_1=1$ on $\Delta_{01}$,  $d\lambda_0=-d\lambda_1$ on this set. By change of variables, it follows that
\[
\int_{S_1\times\Delta_{01}}f\wedge\Omega_{q-1}(W)=\int_{S_1\times I}f\wedge\Omega_{q-1}(\hat w_1),
\]
where $\hat w_1$ is the homotopic form as in \S\ref{C1domain} and $\mu\in I$. Similarly, 
\[
\int_{S_2\times\Delta_{02}}f\wedge\Omega_{q-1}(W)=\int_{S_2\times I}f\wedge\Omega_{q-1}(\hat w_2).
\]
Moreover, since $\lambda_0=1$ on $\Delta_0$, $w=w_0$ on this singleton. Thus 
\begin{align*}
T_{q}^{W}(f)=-\int_{bD_1\times bD_2\times \Delta}f&\wedge\Omega_{q-1}(W)+\int_{bD_1\times D_2 \times I}f \wedge \Omega_{q-1}(\hat w_1)\\&+\int_{D_1\times bD_2\times I}f\wedge\Omega_{q-1}(\hat w_2) -\int_D f \wedge K_{q-1}
\end{align*}
for $f\in C_{0,q}(\bar D)$ and $1\le q\le n$.
\end{remark}

\begin{remark}
Of particular importance here, when $D_1$ and $D_2$ are 1-dimensional the first integral in the displayed equation above vanishes. I.e., 
\begin{equation*}
\int_{bD_1\times bD_2\times \Delta}f\wedge\Omega_{0}(W)=0.
\end{equation*}
This follows since the degree of the form (with respect to the integration variable) in the integrand must equal the dimension of the set over which it is integrated, otherwise the integral is  0. This argument will be called
{\it dimension-degree counting} when used below. When $D=D_1\times D_2$ is 2-dimensional, the operator $T_1^W$ reduces to 
\begin{equation}
\label{T1}
T_{1}^{W}(f)=\int_{bD_1\times D_2 \times I}f \wedge \Omega_{0}(\hat w_1)+\int_{D_1\times bD_2\times I}f\wedge\Omega_{0}(\hat w_2)-\int_D f \wedge K_{0}
\end{equation}
for $f\in C_{0,1}^1(\bar D)$, where
\[
\Omega_0(\hat w_j)=\frac{1}{(2\pi i)^2}w_j\wedge w_0\wedge d\mu\qquad\text{for}\,\,\,j=1,2
\]
and 
\[
K_0=\frac{1}{(2\pi i)^2}\frac{(\bar\zeta_1-\bar z_1)\,d\zeta_1\wedge d\bar \zeta_2 \wedge d\zeta_2+(\bar \zeta_2-\bar z_2)\,d\zeta_2\wedge d\bar \zeta_1\wedge d\zeta_1}{|\zeta-z|^4}.
\]
The form on the operator $T^W_1$ given by \eqref{T1} is the starting point for the computations in the next section.

By dimension-degree counting, it is also easy to see
\[
T_2^{W}(f)=-\int_Df\wedge K_1\qquad\text{for}\,\,\,f\in C_{0,2}^1(\bar D),
\]
where
\[
K_1=\frac{1}{(2\pi i)^2}\frac{(\bar\zeta_1-\bar z_1)\,d\zeta_2 \wedge d\zeta_1\wedge d\bar z_2+(\bar \zeta_2-\bar z_2)\,d\zeta_1\wedge d\zeta_2\wedge d\bar z_1}{|\zeta-z|^4}.
\]
\end{remark}


\section{The $\bar \partial$-equation on product spaces}

For a two-dimensional product domain, the right hand side of \eqref{T1} can be written as explicit integral operators.
This is now derived for arbitrary bounded domains $D_1, D_2\subset\C^1$ with
$C^1$ boundary, using the Cauchy generating form $w$.

\subsection{The product space $D_1\times D_2\subset\C^2$}\label{SS:product}

Definition \ref{D:generateC1domain} shows  the Cauchy kernel
\[
w=\frac{d\zeta}{\zeta-z}
\]
is a generating form for any domain $\Omega\subset\C^1$. This form is holomorphic in $z$, away from $z=\zeta$. Set
\[
w_j=\frac{d\zeta_j}{\zeta_j-z_j}\qquad\text{for}\,\,\,j=1,2,
\]
the Cauchy kernels on the two domains $D_j$, $j=1,2$. Note that Remark \ref{R:generating} shows that $w_1, w_2$ give generating forms on $S_1\times\left(D_1\times D_2\right)$ and $S_2\times\left(D_1\times D_2\right)$ respectively.
For the rest of this section $w_j $ will refer to the Cauchy forms above, and $\hat w_j$ is defined via \eqref{E:homotopy} relative to these particular $w_j$.

Direct computation from \eqref{E:CFkernel1} gives
\[
\Omega_0(\hat w_1)=\frac{1}{(2 \pi i)^2}\frac{(\bar \zeta_2 - \bar z_2)\,d\zeta_1\wedge d\zeta_2\wedge d\mu}{(\zeta_1-z_1)|\zeta-z|^2}
\]
and
\[
\Omega_0(\hat w_2)=\frac{1}{(2 \pi i)^2}\frac{(\bar \zeta_1 - \bar z_1)\,d\zeta_2\wedge d\zeta_1\wedge d\mu}{(\zeta_2-z_2)|\zeta-z|^2}.
\]
Let $f\in C_{1,0}^1(\bar D)$ and write $f=f_1d\bar \zeta_1+f_2d\bar\zeta_2$. The first term on the right hand side of \eqref{T1} becomes
\begin{equation*}
\begin{split}
\int_{bD_1\times D_2 \times I}f \wedge \Omega_{0}(\hat w_1)
&=\int_{bD_1\times D_2 \times I}f\wedge\frac{1}{(2 \pi i)^2}\frac{(\bar \zeta_2 - \bar z_2)\,d\zeta_1\wedge d\zeta_2\wedge d\mu}{(\zeta_1-z_1)|\zeta-z|^2}\\
&=\frac{1}{(2 \pi i)^2}\int_{bD_1\times D_2 \times I}\frac{f_2 \cdot (\bar \zeta_2 - \bar z_2)\,d\bar \zeta_2 \wedge d\zeta_1\wedge d\zeta_2\wedge d\mu}{(\zeta_1-z_1)|\zeta-z|^2}\\
&=\frac{1}{(2 \pi i)^2}\int_{bD_1\times D_2}\frac{f_2 \cdot (\bar \zeta_2 - \bar z_2)\,d\bar \zeta_2 \wedge d\zeta_1\wedge d\zeta_2}{(\zeta_1-z_1)|\zeta-z|^2}.
\end{split}
\end{equation*}
The second equality follows from dimension-degree counting.

Now focus on the integration in $\zeta_1$ and apply Stokes' theorem to $\zeta_1$ on $D_1\setminus D(z_1;\tau)$, where $D(z_1;\tau)$ is the disk centered at $z_1$ with radius $\tau$. This yields
\begin{equation*}
\begin{split}
\int_{bD_1\times D_2 \times I}f \wedge \Omega_{0}(\hat w_1)
&=\frac{1}{(2 \pi i)^2}\int_{D_2} \Big[\int_{D_1\setminus D(z_1;\tau)} \frac{\partial}{\partial \bar \zeta_1} \left( \frac{f_2 \cdot (\bar \zeta_2 - \bar z_2)}{(z_1-\zeta_1)|\zeta-z|^2} \right) d\bar \zeta_1 \wedge d\zeta_1\\
&+ \int_{|\zeta_1-z_1|=\tau} \frac{f_2\cdot(\bar \zeta_2-\bar z_2)}{(z_1-\zeta_1)|\zeta-z|^2}d\zeta_1 \Big]d\bar \zeta_2 \wedge d \zeta_2\\
=\frac{1}{(2 \pi i)^2}&\int_{D_2} \Big[\int_{D_1\setminus D(z_1;\tau)} \left( \frac{\partial f_2}{\partial \bar \zeta_1} \cdot \frac{ \bar \zeta_2 - \bar z_2}{(z_1-\zeta_1)|\zeta-z|^2} + \frac{f_2 \cdot (\bar \zeta_2 - \bar z_2)}{|\zeta-z|^4} \right) d\bar \zeta_1 \wedge d\zeta_1\\
&~+ \int_{|\zeta_1-z_1|=\tau} \frac{f_2\cdot(\bar \zeta_2-\bar z_2)}{(z_1-\zeta_1)|\zeta-z|^2}d\zeta_1 \Big]d\bar \zeta_2 \wedge d \zeta_2.
\end{split}
\end{equation*}

Letting $\tau\to 0^+$ gives
\begin{equation*}
\lim_{\tau\to0^+}\int_{D_1\setminus D(z_1;\tau)} \frac{\partial f_2}{\partial \bar \zeta_1} \cdot \frac{ \bar \zeta_2 - \bar z_2}{(z_1-\zeta_1)|\zeta-z|^2} d\bar \zeta_1 \wedge d\zeta_1=\int_{D_1} \frac{\partial f_2}{\partial \bar \zeta_1} \cdot \frac{ \bar \zeta_2 - \bar z_2}{(z_1-\zeta_1)|\zeta-z|^2} d\bar \zeta_1 \wedge d\zeta_1
\end{equation*}
\begin{equation*}
\lim_{\tau\to0^+}\int_{D_1\setminus D(z_1;\tau)}\frac{f_2 \cdot (\bar \zeta_2 - \bar z_2)}{|\zeta-z|^4} d\bar \zeta_1 \wedge d\zeta_1=\int_{D_1}\frac{f_2 \cdot (\bar \zeta_2 - \bar z_2)}{|\zeta-z|^4} d\bar \zeta_1 \wedge d\zeta_1,
\end{equation*}
and
\begin{equation*}
\lim_{\tau\to0^+}\int_{|\zeta_1-z_1|=\tau} \frac{f_2\cdot(\bar \zeta_2-\bar z_2)}{(z_1-\zeta_1)|\zeta-z|^2}d\zeta_1=-2\pi i \frac{f_2(z_1,\zeta_2)}{\zeta_2-z_2}.
\end{equation*}
Substituting these terms in the previous equation, the first term on the right hand side of \eqref{T1} can be written
\begin{equation*}
\begin{split}
\int_{bD_1\times D_2 \times I}f \wedge \Omega_{0}(\hat w_1)
&=\frac{-1}{2 \pi i}\int_{D_2}\frac{f_2(z_1,\zeta_2)}{\zeta_2-z_2}d\bar \zeta_2\wedge d\zeta_2 \\&+\frac{1}{(2 \pi i)^2}\int_{D_1 \times D_2}\frac{f_2\cdot(\bar \zeta_2 - \bar z_2)}{|\zeta-z|^4} d\bar \zeta_1 \wedge d\zeta_1\wedge d \bar \zeta_2\wedge d \zeta_2 \\
&+\frac{-1}{(2 \pi i)^2}\int_{D_1 \times D_2}\frac{\partial f_2}{\partial \bar \zeta_1}\cdot\frac{\bar \zeta_2-\bar z_2}{(\zeta_1-z_1)|\zeta-z|^2}\,d\bar \zeta_1\wedge d\zeta_1\wedge d\bar \zeta_2\wedge d\zeta_2.
\end{split}
\end{equation*}
Similarly, the second term on the right hand side in \eqref{T1} can be written
\begin{equation*}
\begin{split}
\int_{D_1\times bD_2\times I}f\wedge\Omega_{0}(\hat w_2)
&=\frac{-1}{2 \pi i}\int_{D_1}\frac{f_1(\zeta_1,z_2)}{\zeta_1-z_1}d\bar \zeta_1\wedge d\zeta_1\\&+\frac{1}{(2 \pi i)^2}\int_{D_1 \times D_2}\frac{f_1\cdot(\bar \zeta_1 - \bar z_1)}{|\zeta-z|^4} d\bar \zeta_1 \wedge d\zeta_1\wedge d \bar \zeta_2\wedge d \zeta_2\\
&+\frac{-1}{(2 \pi i)^2}\int_{D_1 \times D_2}\frac{\partial f_1}{\partial \bar \zeta_2}\cdot\frac{\bar \zeta_1-\bar z_1}{(\zeta_2-z_2)|\zeta-z|^2}\,d\bar \zeta_1\wedge d\zeta_1\wedge d\bar \zeta_2\wedge d\zeta_2.
\end{split}
\end{equation*}
Note that the last term on the right hand side of \eqref{T1} is
\begin{equation*}
\int_{D_1\times D_2} f \wedge K_{0}=\frac{1}{(2 \pi i)^2}\int_{D_1\times D_2}\frac{f_1\cdot(\bar \zeta_1 - \bar z_1)+f_2\cdot(\bar \zeta_2-\bar z_2)}{|\zeta-z|^4} d\bar \zeta_1\wedge d\zeta_1\wedge d\bar \zeta_2\wedge d\zeta_2.
\end{equation*}

Hence, formula \eqref{T1} can be expressed 
\begin{equation}
\label{4termT1}
\begin{split}
T^W_1(f)
&=\int_{bD_1\times D_2 \times I}f \wedge \Omega_{0}(\hat w_1)+\int_{D_1\times bD_2\times I}f\wedge\Omega_{0}(\hat w_2)-\int_{D_1\times D_2} f \wedge K_{0}\\
&=\frac{-1}{2 \pi i}\int_{D_2}\frac{f_2(z_1,\zeta_2)}{\zeta_2-z_2}d\bar \zeta_2\wedge d\zeta_2+\frac{-1}{2 \pi i}\int_{D_1}\frac{f_1(\zeta_1,z_2)}{\zeta_1-z_1}d\bar \zeta_1\wedge d\zeta_1\\
&~+\frac{-1}{(2 \pi i)^2}\int_{D_1 \times D_2}\frac{\partial f_2}{\partial \bar \zeta_1}\cdot\frac{\bar \zeta_2-\bar z_2}{(\zeta_1-z_1)|\zeta-z|^2}\,d\bar \zeta_1\wedge d\zeta_1\wedge d\bar \zeta_2\wedge d\zeta_2\\
&~+\frac{-1}{(2 \pi i)^2}\int_{D_1 \times D_2}\frac{\partial f_1}{\partial \bar \zeta_2}\cdot\frac{\bar \zeta_1-\bar z_1}{(\zeta_2-z_2)|\zeta-z|^2}\,d\bar \zeta_1\wedge d\zeta_1\wedge d\bar \zeta_2\wedge d\zeta_2.
\end{split}
\end{equation}

\begin{definition}\label{D:D} If $f=f_1d\bar z_1+f_2d\bar z_2$ is a $(0,1)$-form, let $$\sd f= \frac 12\left( \frac{\partial f_1}{\partial\bar z_2} +  \frac{\partial f_2}{\partial\bar z_1}\right),$$
with derivatives taken in the distributional sense.
If $\bar\partial f=0$, note $\sd f =\frac{\partial f_1}{\partial \bar z_2}=\frac{\partial f_2}{\partial \bar z_1}$. 
\end{definition}

Using Definition \ref{D:D}, combine the last two terms in \eqref{4termT1}. The following expression for a strong solution operator on $D_1\times D_2$ is obtained:

\begin{proposition}\label{P:strong} Suppose $D_1, D_2\subset\C$ are domains with $C^1$ boundary. If $f\in C^1_{0,1}\left(\overline{D_1\times D_2}\right)$ satisfies $\dbar f=0$, define
\begin{equation}
\label{derivativeT1}
\begin{split}
T(f)
&=\frac{-1}{2 \pi i}\int_{D_2}\frac{f_2(z_1,\zeta_2)}{\zeta_2-z_2}d\bar \zeta_2\wedge d\zeta_2+\frac{-1}{2 \pi i}\int_{D_1}\frac{f_1(\zeta_1,z_2)}{\zeta_1-z_1}d\bar \zeta_1\wedge d\zeta_1\\
&~+\frac{-1}{(2 \pi i)^2}\int_{D_1 \times D_2}\frac{\sd(f)(\zeta_1,\zeta_2)}{(\zeta_1-z_1)(\zeta_2-z_2)}\,d\bar \zeta_1\wedge d\zeta_1\wedge d\bar \zeta_2\wedge d\zeta_2.
\end{split}
\end{equation}

Then $\dbar (Tf) =f$.
\end{proposition}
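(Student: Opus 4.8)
The plan is to obtain \eqref{derivativeT1} from \eqref{4termT1} and then invoke the representation formula \eqref{E:SolutionOp1} together with Remark \ref{R:productSolution}. First, I would observe that the two volume integrals in \eqref{4termT1} can be merged: since $\dbar f = 0$ gives $\frac{\partial f_2}{\partial\bar\zeta_1} = \frac{\partial f_1}{\partial\bar\zeta_2} = \sd f$, the sum of the last two terms in \eqref{4termT1} has integrand
\[
\sd f(\zeta)\cdot \frac{(\bar\zeta_2-\bar z_2)(\zeta_2-z_2) + (\bar\zeta_1-\bar z_1)(\zeta_1-z_1)}{(\zeta_1-z_1)(\zeta_2-z_2)|\zeta-z|^4} = \frac{\sd f(\zeta)}{(\zeta_1-z_1)(\zeta_2-z_2)|\zeta-z|^2}\cdot |\zeta-z|^2,
\]
where I have used $|\zeta-z|^2 = |\zeta_1-z_1|^2 + |\zeta_2-z_2|^2$. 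The $|\zeta-z|^2$ factors cancel, leaving exactly the kernel $\frac{\sd f(\zeta)}{(\zeta_1-z_1)(\zeta_2-z_2)}$ of \eqref{derivativeT1}. Hence $T(f) = T_1^W(f)$ as elements of $C_{0,0}(D_1\times D_2)$, at least for $f \in C_{0,1}^1(\overline{D_1\times D_2})$ with $\dbar f = 0$; this cancellation is the one genuinely algebraic point and it is routine.

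Next I would apply the representation formula \eqref{E:SolutionOp1} with $q=1$, $n=2$, and $w = W$ the partial convex combination \eqref{cvxcmb} built from the Cauchy forms $w_1, w_2$ (legitimate by Remark \ref{R:generating}). The formula reads $f = \int_{bD}f\wedge\Omega_1(W) + \dbar T_1^W(f) + T_2^W(\dbar f)$ on $D_1\times D_2$. Since $\dbar f = 0$, the last term vanishes. For the boundary term: each $w_j$ is holomorphic in $z$ away from the diagonal, so by Remark \ref{R:1} (applied facet by facet, exactly as in Remark \ref{R:CF}) the kernel $\Omega_1(W)$ vanishes on every facet of the relevant integration domains — the factor $(\dbar_z W)^q$ with $q=1$ kills it wherever $W$ is holomorphic in $z$, i.e. everywhere on sets (1)--(4) of \eqref{cvxcmb}. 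Thus $\int_{bD}f\wedge\Omega_1(W) = 0$, and \eqref{E:SolutionOp1} collapses to $\dbar T_1^W(f) = f$. Combined with $T(f) = T_1^W(f)$ from the previous paragraph, this gives $\dbar(Tf) = f$.

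The main obstacle is a regularity/justification issue rather than an algebraic one: the integrand of the merged volume term in \eqref{derivativeT1} contains the factor $\frac{1}{(\zeta_1-z_1)(\zeta_2-z_2)}$, which is only locally integrable (not a standard Calderón–Zygmund-type kernel for a single variable), so one must confirm that the manipulations of the iterated Stokes' theorem argument leading to \eqref{4termT1} — in particular the passage to the limit $\tau\to 0^+$ and the Cauchy-type residue producing the boundary integrals $\frac{-1}{2\pi i}\int_{D_2}\frac{f_2(z_1,\zeta_2)}{\zeta_2-z_2}$ etc. — are valid for $f \in C^1_{0,1}(\overline{D_1\times D_2})$. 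These computations are already carried out in the preceding pages (the derivation of \eqref{4termT1}), so here I would simply note that \eqref{derivativeT1} is literally \eqref{4termT1} after the cancellation, and that \eqref{4termT1} equals $T_1^W(f)$ by construction; the only new content of the proposition is the observation that the two higher-order terms combine into the single Cauchy-type double integral. A secondary point worth a sentence is that $\dbar(Tf)$ is to be understood in the sense that $Tf \in C^1(D_1\times D_2)$ (guaranteed by the mapping property at the end of the theorem in \S\ref{C1domain}, since $f$ is smooth on $\overline{D}$), so the equation $\dbar(Tf) = f$ holds classically on $D_1\times D_2$.
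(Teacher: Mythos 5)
Your proposal is correct and follows the paper's own route: \eqref{derivativeT1} is obtained from \eqref{4termT1} by combining the last two volume integrals via Definition \ref{D:D} (using $\dbar f=0$ to identify $\partial f_1/\partial\bar\zeta_2=\partial f_2/\partial\bar\zeta_1=\sd f$), and the identity $\dbar T_1^W(f)=f$ is exactly Remark \ref{R:productSolution}, whose proof is deferred to \cite[\S(2.5)]{RangeSiu}. Two slips are worth flagging. First, in your displayed cancellation the common denominator should be $(\zeta_1-z_1)(\zeta_2-z_2)|\zeta-z|^2$, not $(\zeta_1-z_1)(\zeta_2-z_2)|\zeta-z|^4$; the numerator sums to $|\zeta_1-z_1|^2+|\zeta_2-z_2|^2=|\zeta-z|^2$ and cancels the $|\zeta-z|^2$ completely, yielding the kernel $\sd f/\bigl((\zeta_1-z_1)(\zeta_2-z_2)\bigr)$ that you correctly end with, so this is only a typo. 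Second, and more substantively, your justification for the vanishing of the residual boundary term is wrong as stated: $W$ is \emph{not} holomorphic in $z$ on the sets (1)--(4) of \eqref{cvxcmb}, since each of those sets permits $\lambda_0>0$ and the Bochner--Martinelli component $w_0$ is not holomorphic in $z$ (indeed, if $\Omega_0(W)$ vanished on those sets, $T_1^W$ itself would be identically zero). The kernel $\Omega_1(W)$ vanishes only on $\{\lambda_0=0\}$, where $W$ is a combination of the Cauchy forms $w_1,w_2$ alone; the paper makes precisely this point in Remark \ref{R:CF}, noting that the sets (1)--(4) never allow $\lambda_0=0$ and that the vanishing on $\{\lambda_0=0\}$ is what the Stokes' theorem argument of \cite{RangeSiu} actually uses. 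Since you also cite Remark \ref{R:productSolution}, which supplies $\dbar T_1^W(f)=f$ outright, your proof still closes, but the sentence claiming $(\dbar_z W)^q$ kills the kernel everywhere on sets (1)--(4) should be deleted or corrected.
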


\begin{remark}
\label{T1forDxA}
Consider the third term on the right hand side of \eqref{4termT1}. If the idea from \cite{FornaessLeeZhang, Henkin71} is followed and Stokes' theorem is applied  in the $\zeta_2$ variable, this term becomes
\begin{align*}
\int_{D_1\times D_2}\frac{\partial f_1}{\partial \bar \zeta_2} \cdot \frac{ \bar \zeta_2 - \bar z_2}{(\zeta_1-z_1)|\zeta-z|^2} d\bar \zeta_1 \wedge d\zeta_1\wedge d \bar \zeta_2 \wedge d\zeta_2
&=\int_{D_1} \Big[ \int_{bD_2} \frac{f_1\cdot(\bar \zeta_2-\bar z_2)}{(\zeta_1-z_1)|\zeta-z|^2} d\zeta_2\\
&~-\int_{D_2} \frac{f_1\cdot(\bar \zeta_1-\bar z_1)}{|\zeta-z|^4}d\bar \zeta_2\wedge d\zeta_2 \Big] d\bar \zeta_1\wedge d\zeta_1.
\end{align*}

Similarly, the last term in \eqref{4termT1} can be rewritten as
\begin{align*}
\int_{D_1\times D_2}\frac{\partial f_2}{\partial \bar \zeta_1} \cdot \frac{ \bar \zeta_1 - \bar z_1}{(\zeta_2-z_2)|\zeta-z|^2} d\bar \zeta_1 \wedge d\zeta_1\wedge d \bar \zeta_2 \wedge d\zeta_2
&=\int_{D_2} \Big[ \int_{bD_1} \frac{f_2\cdot(\bar \zeta_1-\bar z_1)}{(\zeta_2-z_2)|\zeta-z|^2} d\zeta_1\\
&-\int_{D_1} \frac{f_2\cdot(\bar \zeta_2-\bar z_2)}{|\zeta-z|^4}d\bar \zeta_1\wedge d\zeta_1 \Big] d\bar \zeta_2\wedge d\zeta_2.
\end{align*}

Thus an alternative expression for the operator $T=T_1^W$ is
\begin{equation*}
\begin{split}
T(f)
&=\frac{-1}{2 \pi i}\int_{D_2}\frac{f_2(z_1,\zeta_2)}{\zeta_2-z_2}d\bar \zeta_2\wedge d\zeta_2-\frac{1}{(2 \pi i)^2}\int_{D_1\times bD_2}\frac{f_1\cdot(\bar \zeta_2-\bar z_2)}{(\zeta_1-z_1)|\zeta-z|^2} d\bar \zeta_1\wedge d\zeta_1\wedge d\zeta_2\\
&~+\frac{-1}{2 \pi i}\int_{D_1}\frac{f_1(\zeta_1,z_2)}{\zeta_1-z_1}d\bar \zeta_1\wedge d\zeta_1-\frac{1}{(2 \pi i)^2}\int_{bD_1\times D_2}\frac{f_2\cdot(\bar \zeta_1-\bar z_1)}{(\zeta_2-z_2)|\zeta-z|^2} d\zeta_1\wedge d \bar \zeta_2 \wedge d\zeta_2\\
&+\frac{1}{(2 \pi i)^2}\int_{D_1\times D_2}\frac{f_1\cdot(\bar \zeta_1 - \bar z_1)+f_2\cdot(\bar \zeta_2-\bar z_2)}{|\zeta-z|^4} d\bar \zeta_1\wedge d\zeta_1\wedge d\bar \zeta_2\wedge d\zeta_2.
\end{split}
\end{equation*}
This formula corrects a small error in \cite{Henkin71, FornaessLeeZhang}, where a different constant appears before the last term.
\end{remark}


\section{The $L^p$ estimate of the solution operator}
\label{LpofT}

For the rest of the paper, $T$ denotes the operator defined by \eqref{derivativeT1}.
\subsection{The $L^p$ estimate of $T$}

As a integral operator, $T$ is first shown to be well-defined and bounded between particular Banach spaces. The following lemma is used.

\begin{lemma}
\label{lem135}
Let $D_1, D_2\subset\C$ be bounded domains and $1\le p<\infty$. If $g\in L^p(D_1\times D_2)$, the functions
\[
\frac{-1}{2 \pi i}\int_{D_1}\frac{g(\zeta_1, z_2)}{\zeta_1-z_1}d\bar \zeta_1\wedge d\zeta_1\quad\text{and}\quad\frac{-1}{2 \pi i}\int_{D_2}\frac{g(z_1,\zeta_2)}{\zeta_2-z_2}d\bar \zeta_2\wedge d\zeta_2
\]
belong to $L^p(D_1\times D_2)$. Their $L^p$-norms are bounded by $C\|g\|_{L^p(D_1\times D_2)}$, for a constant $C>0$ independent of $g$.

\end{lemma}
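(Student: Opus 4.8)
The plan is to reduce this two-dimensional statement to the classical $L^p$-boundedness of the one-variable solid Cauchy (Cauchy--Pompeiu) transform, and then reassemble the full estimate by slicing with Fubini's theorem. No new ideas beyond these two ingredients are needed.

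First I would record the one-variable fact. For a bounded domain $\Omega\subset\C$ and $h\in L^p(\Omega)$, $1\le p<\infty$, put $(C_\Omega h)(z)=\frac{-1}{2\pi i}\int_\Omega\frac{h(\zeta)}{\zeta-z}\,d\bar\zeta\wedge d\zeta$. Since $d\bar\zeta\wedge d\zeta=2i\,dA(\zeta)$ with $dA$ Lebesgue measure on $\C\cong\R^2$, this operator is, up to a harmless reflection, convolution of $h$ (extended by zero to $\C$) with the kernel $k(w)=-1/(\pi w)$. The key point is that $|k(w)|=1/(\pi|w|)$ is locally integrable on $\R^2$: writing $R=\operatorname{diam}\Omega$, the truncated kernel $k_R=k\cdot\chi_{\{|w|\le R\}}$ lies in $L^1(\C)$, and for $h$ supported in $\Omega$ one has $(C_\Omega h)|_\Omega=(k_R\ast h)|_\Omega$ because $|\zeta-z|\le R$ there. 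Young's convolution inequality then gives
\[
\|C_\Omega h\|_{L^p(\Omega)}\le\|k_R\|_{L^1(\C)}\,\|h\|_{L^p(\Omega)},
\]
and the same reasoning applied to $|h|$ shows the defining integral converges absolutely for a.e.\ $z$, so $C_\Omega h$ is a bona fide measurable function. I would set $C:=\|k_R\|_{L^1(\C)}$ with $R$ the larger of $\operatorname{diam}D_1$ and $\operatorname{diam}D_2$, which is finite since both domains are bounded.

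Next I would slice. For the first function, write $F(z_1,z_2)=\frac{-1}{2\pi i}\int_{D_1}\frac{g(\zeta_1,z_2)}{\zeta_1-z_1}\,d\bar\zeta_1\wedge d\zeta_1$. By Tonelli's theorem applied to $|g|^p$, $g(\cdot,z_2)\in L^p(D_1)$ for a.e.\ $z_2\in D_2$; for each such $z_2$, the one-variable estimate with $h=g(\cdot,z_2)$ and $\Omega=D_1$ gives $F(\cdot,z_2)=C_{D_1}\big(g(\cdot,z_2)\big)\in L^p(D_1)$ with $\|F(\cdot,z_2)\|_{L^p(D_1)}^p\le C^p\|g(\cdot,z_2)\|_{L^p(D_1)}^p$, while measurability of $F$ on $D_1\times D_2$ follows by combining the a.e.\ absolute convergence in $z_1$ with Tonelli. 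Integrating the slice estimate in $z_2$ over $D_2$ and using Tonelli once more,
\[
\|F\|_{L^p(D_1\times D_2)}^p=\int_{D_2}\|F(\cdot,z_2)\|_{L^p(D_1)}^p\,dA(z_2)\le C^p\int_{D_2}\|g(\cdot,z_2)\|_{L^p(D_1)}^p\,dA(z_2)=C^p\,\|g\|_{L^p(D_1\times D_2)}^p.
\]
The second function is handled identically, exchanging the roles of the two coordinates and taking $\Omega=D_2$.

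I do not anticipate a genuine obstacle; the only care needed is the Fubini/Tonelli bookkeeping — in particular verifying a.e.\ absolute convergence of the singular integral in the slice variable, so that the one-variable estimate is legitimately applicable and $F$ is measurable — together with checking that $C$ is uniform in $g$ (it depends only on $p$ and on $\operatorname{diam}D_1,\operatorname{diam}D_2$). If one prefers to avoid Young's inequality, the slice estimate follows equally well from Minkowski's integral inequality applied directly to $C_\Omega h$ written as an average of translates of $h$.
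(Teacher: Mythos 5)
Your argument is correct and is essentially the paper's own proof: both reduce to the one-variable estimate by viewing the slice integral as convolution with the truncated kernel $|\zeta|^{-1}\chi_{\{|\zeta|\le R\}}\in L^1(\C)$, apply Young's inequality, and integrate the slice bound over the remaining variable. Your additional care with measurability and a.e.\ absolute convergence is fine but not a substantive departure.
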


\begin{proof} The symmetry of the functions show it suffices to prove the result for either one; consider the second function.
Let $g^{z_1}(\zeta_2)=g(z_1,\zeta_2)\chi_{D_2}(\zeta_2)$, where $\chi_{D_2}$ is the characteristic function over $D_2$. Since $g\in L^p(D_1\times D_2)$,  $g^{z_1}\in L^p(\C)$.

Let $B=B(0;R)$ be the disk centered at $0$ of radius $R$ in $\C$ and $h(\zeta)=\frac{1}{|\zeta|}\chi_{B}(\zeta)$, where $\chi_{B}$ is the characteristic function over $B$ and $R$ is sufficiently large (say $R>\text{diam}(D_2)$). Then $h\in L^1(\C)$.

By Young's inequality, $g^{z_1}*h\in L^p(\C)$. Note that, for any $z_2,\zeta_2\in D_2$, $|z_2-\zeta_2|\le\text{diam}(D_2)<R$, so $z_2-\zeta_2\in B$. Therefore,
\begin{align*}
\int_{D_2}\left|\int_{D_2}\frac{g(z_1,\zeta_2)\,dA(\zeta_2)}{\zeta_2-z_2}\right|^p\,dA(z_2)
&\le\int_{\C}\left(\int_{\C}\frac{|g^{z_1}(\zeta_2)|\chi_B(\zeta_2-z_2)\,dA(\zeta_2)}{|\zeta_2-z_2|}\right)^p\,dA(z_2)\\
&\le C\int_{D_2}|g(z_1,z_2)|^p\,dA(z_2).
\end{align*}
The conclusion follows by integrating this inequality in $z_1$ over $D_1$. 
\end{proof}

Lemma \ref{lem135} applies to the first two terms on the right hand side of \eqref{derivativeT1}. Thus, the $L^p$-norms of these terms are bounded by $C\|f_2\|_{L^p(D_1\times D_2)}$ and $C\|f_1\|_{L^p(D_1\times D_2)}$ respectively, provided $f_1,f_2\in L^p(D_1\times D_2)$. Additionally, if $\sd(f)\in L^p(D_1\times D_2)$, two applications of Lemma \ref{lem135} show the last term in \eqref{derivativeT1} is in $L^p(D_1\times D_2)$, with $L^p$-norm bounded by $C\|\sd(f)\|_{L^p(D_1\times D_2)}$.

\begin{definition}\label{D:banach} For $1\le p<\infty$, define the Banach space of $(0,1)$-forms on $D_1\times D_2$ 
\[
\cb=\left\{f=f_1d\bar z_1+f_2d\bar z_2~|~f_1,f_2\in L^p\,\text{and}\, \sd(f)\in L^p\right\}
\]
with norm
$\|f\|_{\cb}:=\|f_1\|_{L^p(D_1\times D_2)}+\|f_2\|_{L^p(D_1\times D_2)}+\|\sd(f)\|_{L^p(D_1\times D_2)}$.
\end{definition}
The argument above Definition \ref{D:banach} proves

\begin{lemma}
\label{lem24}

The operator $T$ given by \eqref{derivativeT1} maps $\cb$ to $L^p(D_1\times D_2)$ boundedly. 
\end{lemma}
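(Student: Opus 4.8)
The plan is to prove Lemma \ref{lem24} as an immediate corollary of Lemma \ref{lem135}, essentially by unpacking the paragraph that precedes Definition \ref{D:banach}. The operator $T$ from \eqref{derivativeT1} is a sum of three integral operators, and I would estimate each summand separately in $L^p(D_1\times D_2)$, then combine via the triangle inequality.

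First I would treat the first term $\frac{-1}{2\pi i}\int_{D_2}\frac{f_2(z_1,\zeta_2)}{\zeta_2-z_2}\,d\bar\zeta_2\wedge d\zeta_2$. Since $f\in\cb$, we have $f_2\in L^p(D_1\times D_2)$, so Lemma \ref{lem135} (applied with $g=f_2$, using the second of the two displayed functions there) shows this term lies in $L^p(D_1\times D_2)$ with norm at most $C\|f_2\|_{L^p(D_1\times D_2)}$. Symmetrically, the second term equals the operator in Lemma \ref{lem135} applied to $g=f_1$, hence is in $L^p$ with norm at most $C\|f_1\|_{L^p(D_1\times D_2)}$.

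For the third term, $\frac{-1}{(2\pi i)^2}\int_{D_1\times D_2}\frac{\sd(f)(\zeta_1,\zeta_2)}{(\zeta_1-z_1)(\zeta_2-z_2)}\,d\bar\zeta_1\wedge d\zeta_1\wedge d\bar\zeta_2\wedge d\zeta_2$, I would view it as an iterated application of the Cauchy-transform operator from Lemma \ref{lem135}. Write $g=\sd(f)\in L^p(D_1\times D_2)$, set $h(z_1,z_2)=\frac{-1}{2\pi i}\int_{D_1}\frac{g(\zeta_1,z_2)}{\zeta_1-z_1}\,d\bar\zeta_1\wedge d\zeta_1$; by the first function in Lemma \ref{lem135}, $h\in L^p(D_1\times D_2)$ with $\|h\|_{L^p}\le C\|\sd(f)\|_{L^p}$. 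Then the third term of $Tf$ is $\frac{-1}{2\pi i}\int_{D_2}\frac{h(z_1,\zeta_2)}{\zeta_2-z_2}\,d\bar\zeta_2\wedge d\zeta_2$, so a second application of Lemma \ref{lem135} (with $g=h$) gives that it is in $L^p$ with norm $\le C^2\|\sd(f)\|_{L^p}$. One small bookkeeping point worth checking is that Fubini justifies splitting the double Cauchy integral into the iterated one — this is fine since, after replacing all factors by their absolute values, the integrand is a product of the two $L^1$ kernels against $|g|\in L^p$, and the iterated $L^p$ bounds just obtained certify finiteness.

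Summing the three estimates and using $\|a+b+c\|_{L^p}\le\|a\|_{L^p}+\|b\|_{L^p}+\|c\|_{L^p}$ yields $\|Tf\|_{L^p(D_1\times D_2)}\le C\bigl(\|f_1\|_{L^p}+\|f_2\|_{L^p}+\|\sd(f)\|_{L^p}\bigr)=C\|f\|_{\cb}$, which is exactly boundedness of $T:\cb\to L^p(D_1\times D_2)$. There is no serious obstacle here; the content is entirely in Lemma \ref{lem135} (whose proof via Young's inequality is already given), and the only thing to be careful about is the Fubini/iteration argument for the double integral and keeping track that $\sd(f)$ — not just $f_1,f_2$ — is the quantity controlling the third term. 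I would keep the write-up to a few lines, citing Lemma \ref{lem135} three times.
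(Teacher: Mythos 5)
Your proposal is correct and follows essentially the same route as the paper: the paper's proof is precisely the paragraph preceding Definition \ref{D:banach}, which applies Lemma \ref{lem135} once to each of the first two terms and twice (as an iterated Cauchy transform) to the double-integral term, then sums. Your added remark about justifying the iteration via Fubini/Tonelli is a reasonable bit of extra care that the paper leaves implicit.
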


\subsection{Passage to a weak solution}\label{SS:punctured}

Lemma \ref{lem24} shows that $T:\cb\to L^p(D_1\times D_2)$ is well-defined. By \eqref{derivativeT1}, $T$ is a strong solution operator to $\dbar(Tf)=f$ if $T$ is restricted to $f\in C^1_{0,1}\left(\overline{D_1\times D_2}\right)$ and $\dbar f=0$. In this section $Tf$ is shown to be a weak solution to $\dbar(Tf)=f$, if $\dbar f=0$ weakly and $f\in\cb$ , by a limit argument.

\begin{theorem}
\label{Lpwithdbarf}
For $j=1,2$, let $D_j\in\C$ be bounded domains with $C^1$ boundary. Let $f$ be a $(0,1)$-form that is $\dbar$-closed in the weak sense on $D_1\times D_2$. For $1\le p<\infty$, assume that $f\in\cb$. 

Then $u=Tf$, defined by \eqref{derivativeT1},  is a weak solution to the equation $\dbar u=f$ on $D_1\times D_2$ and satisfies the estimate
\[
\|T(f)\|_{L^p(D_1\times D_2)}\le C \|f\|_\cb
\]
for a constant $C>0$ independent of $f$.
\end{theorem}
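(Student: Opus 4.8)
The plan is to reduce Theorem~\ref{Lpwithdbarf} to the already-established facts by a regularization argument. The $L^p$ bound $\|T(f)\|_{L^p(D_1\times D_2)}\le C\|f\|_\cb$ for arbitrary $f\in\cb$ is immediate from Lemma~\ref{lem24}, so the only real content is showing that $u=Tf$ solves $\dbar u=f$ weakly when $f\in\cb$ is merely weakly $\dbar$-closed, rather than lying in $C^1_{0,1}(\overline{D_1\times D_2})$ where Proposition~\ref{P:strong} applies. The natural strategy: approximate $f$ in the $\cb$-norm by a sequence $f^{(k)}$ of forms that are smooth up to the closure and $\dbar$-closed, apply Proposition~\ref{P:strong} to get $\dbar(Tf^{(k)})=f^{(k)}$, and pass to the limit. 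Since $T:\cb\to L^p$ is bounded and linear, $Tf^{(k)}\to Tf$ in $L^p(D_1\times D_2)$; and for any test form $\phi\in C^\infty_{0}$ one has $\langle Tf^{(k)},\dbar^\star\phi\rangle\to\langle Tf,\dbar^\star\phi\rangle$ and $\langle f^{(k)},\phi\rangle\to\langle f,\phi\rangle$ because $f^{(k)}\to f$ at least in $L^p_{\mathrm{loc}}$. This forces $\dbar(Tf)=f$ in the distributional sense.

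First I would set up the approximation. The cleanest route is to exhaust $D_1\times D_2$ by smoothly bounded subdomains, or better, to use the fact that $D_1,D_2$ have $C^1$ boundary so that dilation/translation toward an interior point combined with a Friedrichs mollifier produces $f^{(k)}\in C^\infty_{0,1}(\overline{D_1\times D_2})$. One must verify two things about this regularization: (i) $f^{(k)}\to f$ in $\cb$, i.e.\ $f^{(k)}_j\to f_j$ and $\sd(f^{(k)})\to\sd(f)$ in $L^p$ — the latter because mollification commutes with distributional differentiation, so $\sd(f^{(k)})$ is the mollification of $\sd(f)\in L^p$; and (ii) $\dbar f^{(k)}=0$ — again because $\dbar$ commutes with mollification and with the scaling, and $\dbar f=0$ weakly. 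Care is needed here: on a product domain $\dbar f=0$ has two components (the $d\bar z_1\wedge d\bar z_2$ coefficient), and one of them is exactly $\partial f_1/\partial\bar z_2-\partial f_2/\partial\bar z_1$, so $\dbar f=0$ together with $f\in\cb$ is precisely what makes $\sd f$ well-defined and identifies it with $\partial f_1/\partial\bar z_2=\partial f_2/\partial\bar z_1$, consistent with Definition~\ref{D:D}.

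The main obstacle I anticipate is technical rather than conceptual: constructing the approximating sequence so that it is simultaneously (a) smooth up to $\overline{D_1\times D_2}$, (b) $\dbar$-closed, and (c) convergent to $f$ in the $\cb$-norm. Naively mollifying $f$ only gives smoothness on compact subsets, so one needs the boundary to be pushed in — this is where the $C^1$ hypothesis on $bD_j$ (via a starlike-type retraction on each factor near the boundary) or a careful exhaustion argument enters. An alternative that sidesteps the boundary regularity issue is to prove the weak-solution property \emph{locally}: for each point of $D_1\times D_2$ one only needs the identity $\dbar(Tf)=f$ near that point, and there one may mollify freely on a slightly larger relatively compact subdomain where $T$ can be compared with the operator built on that subdomain. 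I would carry out whichever of these is shorter, then state: by density of $\{f\in C^1_{0,1}(\overline{D_1\times D_2}):\dbar f=0\}$ in $\{f\in\cb:\dbar f=0\}$ and continuity of $T$ from $\cb$ to $L^p$, the strong-solution identity of Proposition~\ref{P:strong} passes to the limit in the sense of distributions, yielding $\dbar(Tf)=f$ weakly; the estimate is Lemma~\ref{lem24}. $\qquad\blacksquare$
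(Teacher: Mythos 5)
Your proposal is correct in outline, and in its second (``alternative'') form it is essentially the paper's proof. The paper does not attempt to approximate $f$ by $\dbar$-closed forms smooth up to $\overline{D_1\times D_2}$; as you suspect, plain mollification only reaches compact subsets and no global starlike retraction is available for general $C^1$ factors. Instead it exhausts by $D_j^{\delta}=\{\rho_j<-\delta\}$, mollifies on $D_1^{\delta}\times D_2^{\delta}$ (where mollification does produce $\dbar$-closed approximants in $C^{1}(\overline{D_1^{\delta}\times D_2^{\delta}})$ converging to $f$ and to $\sd(f)$ in $L^p$ of the subdomain), applies Proposition~\ref{P:strong} there to the operator $T^{\delta}$ built on the subdomain, and lets the mollification parameter go to $0$ to conclude that $T^{\delta}$ is a weak solution operator on $D_1^{\delta}\times D_2^{\delta}$. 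The one step your sketch leaves implicit --- comparing the operator on the subdomain with $T$ itself --- is then made explicit: $T^{\delta}(f)\to T(f)$ in $L^p(D_1\times D_2)$ as $\delta\to 0^+$, which follows by applying Lemma~\ref{lem135} with $f_j$ replaced by $f_j\cdot\chi_{D_j\setminus D_j^{\delta}}$ (and similarly for the double-integral term), so the error is controlled by norms over the collars $D_1\times(D_2\setminus D_2^{\delta})$, etc., which vanish as $\delta\to 0^+$. With that convergence in hand, for a test function supported in a fixed $D_1^{\delta_0}\times D_2^{\delta_0}$ the identity $(T^{\delta}f,\bar\partial^{*}\phi)=(f,\phi)$ passes to the limit exactly as you indicate; the $L^p$ estimate itself is Lemma~\ref{lem24}, as you say.
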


\begin{proof}
For $j=1,2$, let $\rho_j$ be a defining function for the domain $D_j$. Let $D_j^{\delta}=\{z\in\C\,|\,\rho_j(z)<-\delta\}$ for $\delta>0$ sufficiently small. Denote by $T^{\delta}$ the operator in \eqref{derivativeT1} with $D_j$ replaced by $D_j^{\delta}$. Then
\[
\dbar T^{\delta}(f)=f\qquad\text{on\,\,}D_1^{\delta}\times D_2^{\delta}\qquad\text{for\,\,}\dbar\text{-closed\,\,}(0,1)\text{-form\,\,}f\in C^{1}(\overline{D_1^{\delta}\times D_2^{\delta}}).
\]

Let $1\le p<\infty$. For $f\in\cb$, the standard mollifier argument (see for example \cite[Chap. 5.3, Theorem 2]{Evans98}) gives a sequence $\{f^{\ve_j}\}\subset C^{1}(\overline{D_1^{\delta}\times D_2^{\delta}})$, so that
\[
\dbar f^{\ve_j}=0,
\]
\[
f^{\ve_j}\to f\,\,\,\,\text{and}\,\,\,\,\sd(f^{\ve_j})\to \sd(f)\,\,\,\,\text{in}\,\,L^p(D_1^{\delta}\times D_2^{\delta})\,\,\,\,\text{as}\,\,\ve_j\to0.
\]
Thus
\[
\dbar T^{\delta}(f^{\ve_j})=f^{\ve_j}\qquad\text{on\,\,}D_1^{\delta}\times D_2^{\delta}
\]
in the strong sense.

On the other hand, replacing $D_1\times D_2$ by $D_1^{\delta}\times D_2^{\delta}$ in Lemma \ref{lem24} and denoting the Banach space on $D_1^{\delta}\times D_2^{\delta}$ by $\cb^{\delta}$, it follows that $T^{\delta}$ is bounded from $\cb^{\delta}$ to $L^p(D_1^{\delta}\times D_2^{\delta})$. So for $f\in\cb$,  $\lim T^{\delta}(f^{\ve_j})=T^{\delta}(f)$ in $L^p(D_1^{\delta}\times D_2^{\delta})$ as $\ve_j\to0$. Hence $T^{\delta}$ weakly solves the $\dbar$-equation on $D_1^{\delta}\times D_2^{\delta}$.

Next, for each $f\in\cb$, extend $T^{\delta}(f)$ to a function on $D_1\times D_2$ by setting it equal $0$ outside $D_1^{\delta}\times D_2^{\delta}$. Consider $\|T(f)-T^{\delta}(f)\|_{L^p(D_1\times D_2)}$.
Note that $f_2$ can be replaced by $f_2\cdot\chi_{D_2\setminus D_2^{\delta}}(\zeta_2)$ in Lemma \ref{lem135}, where $\chi_{D_2\setminus D_2^{\delta}}$ is the characteristic function over $D_2\setminus D_2^{\delta}$. Thus,  the $L^p$-norm in the conclusion in Lemma \ref{lem135} is be bounded by $C\|f_2\|_{L^p(D_1\times(D_2\setminus D_2^{\delta}))}$, which tends to $0$ as $\delta\to0^+$. A similar argument holds for Lemma \ref{lem24}. Therefore, $\lim T^{\delta}(f)=T(f)$ in $L^p(D_1\times D_2)$ as $\delta\to0^+$. This argument shows that the limit $T(f)$ is unique, and is independent of the defining functions for $D_1$ and $D_2$ used.

To show $T$ weakly solves the $\dbar$-equation on $D_1\times D_2$, argue as follows. For any $\phi\in C_c^{\infty}(D_1\times D_2)$, there is a $\delta_0>0$ so that $\text{supp}\phi\subset D_1^{\delta_0}\times D_2^{\delta_0}$. Let $K=D_1^{\delta_0}\times D_2^{\delta_0}$. Then
\[
(\dbar T(f),\phi)_{D_1\times D_2}=(\dbar T(f),\phi)_K=(T(f),\bar\partial^*\phi)_K=\lim_{\delta\to0^+}(T^{\delta}(f),\bar\partial^*\phi)_K
\]
by $L^p(D_1\times D_2)$-norm convergence. Note that $T^{\delta}$ weakly solves the $\dbar$-equation on $D_1^{\delta}\times D_2^{\delta}$. Thus for $0<\delta<\delta_0$, 
\[
\lim_{\delta\to0^+}(T^{\delta}(f),\bar\partial^*\phi)_K=\lim_{\delta\to0^+}(\dbar T^{\delta}(f),\phi)_K=(f,\phi)_K=(f,\phi)_{D_1\times D_2}.
\]
\end{proof}

Now let $D_1\times D_2=\D\times A$, where $A=A(0;1,\delta)=\{z\in\C\,|\,\delta<|z|<1\}$. Theorem \ref{Lpwithdbarf} directly applies to $\D\times A$.  However the proof of Theorem \ref{Lpwithdbarf} also applies, allowing the limit $\delta\to0^+$ to be taken. This yields the following result.

\begin{corollary}\label{C:mainProduct}
Let $f=f_1d\bar z_1+f_2d\bar z_2$ be a $(0,1)$-form that is $\bar\partial$-closed in the weak sense on $\D\times\D^*$. For $1\le p<\infty$, assume that  $f\in \cb$. 

Then $Tf$ defined
\begin{equation}
\label{T1forDxD*}
\begin{split}
T(f)
&=\frac{-1}{2 \pi i}\int_{\D^*}\frac{f_2(z_1,\zeta_2)}{\zeta_2-z_2}d\bar \zeta_2\wedge d\zeta_2+\frac{-1}{2 \pi i}\int_{\D}\frac{f_1(\zeta_1,z_2)}{\zeta_1-z_1}d\bar \zeta_1\wedge d\zeta_1\\
&~+\frac{-1}{(2 \pi i)^2}\int_{\D\times\D^*}\frac{\sd(f)(\zeta_1,\zeta_2)}{(\zeta_1-z_1)(\zeta_2-z_2)}\,d\bar \zeta_1\wedge d\zeta_1\wedge d\bar \zeta_2\wedge d\zeta_2
\end{split}
\end{equation}
is well-defined, weakly solves $\bar\partial(Tf)=f$ on $\D\times\D^*$, and satisfies the estimate
\[
\|T(f)\|_{L^p(\D\times \D^*)}\le C \|f\|_\cb
\]
for a constant $C>0$ independent of $f$.
\end{corollary}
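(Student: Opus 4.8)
The plan is to get the $L^p$ estimate for free from Section \ref{LpofT} and to obtain the solution property $\dbar(Tf)=f$ by exhausting $\D^*$ from the inside by annuli with smooth boundary, mimicking the proof of Theorem \ref{Lpwithdbarf}. The key observation is that $\D^*=A(0;1,0)$ is itself a bounded domain in $\C$, so Lemma \ref{lem135} and Lemma \ref{lem24} apply verbatim with $D_1=\D$, $D_2=\D^*$: the operator in \eqref{T1forDxD*} is literally the operator \eqref{derivativeT1} for this pair of domains, hence it maps $\cb$ into $L^p(\D\times\D^*)$ with $\|Tf\|_{L^p(\D\times\D^*)}\le C\|f\|_\cb$, and in particular every integral in \eqref{T1forDxD*} converges absolutely for a.e.\ $z$ so that $Tf$ is a well-defined element of $L^p$. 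Thus only the equation $\dbar(Tf)=f$ needs real work, and this is where the approximation enters.

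For $0<\delta<1$ I would set $A_\delta=A(0;1,\delta)$, observe that $A_\delta$ has $C^\infty$ boundary and $A_\delta\nearrow\D^*$ as $\delta\to0^+$, and write $T_\delta$ for the operator \eqref{derivativeT1} on $\D\times A_\delta$. Given $f\in\cb$ that is weakly $\dbar$-closed on $\D\times\D^*$, the restriction $f|_{\D\times A_\delta}$ lies in the corresponding Banach space with $\|f\|_{\cb(\D\times A_\delta)}\le\|f\|_{\cb(\D\times\D^*)}$ and is still weakly $\dbar$-closed, so Theorem \ref{Lpwithdbarf} (applied to $\D\times A_\delta$) gives $\dbar(T_\delta f)=f$ weakly on $\D\times A_\delta$ together with $\|T_\delta f\|_{L^p(\D\times A_\delta)}\le C\|f\|_{\cb(\D\times\D^*)}$. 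The one point that genuinely needs attention — the main obstacle — is that $C$ here must be independent of $\delta$: the constant in Lemma \ref{lem135} depends only on a radius $R$ exceeding the diameter of the second factor, every $A_\delta\subset\D$ has diameter $<2$, so $R=2$ works for all $\delta$ at once, and this uniformity propagates through Lemma \ref{lem24} to $T_\delta$.

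Next I would extend $T_\delta f$ by $0$ to $\D\times\D^*$ and show $T_\delta f\to Tf$ in $L^p(\D\times\D^*)$ as $\delta\to0^+$. On $\D\times A_\delta$ the three terms of $Tf-T_\delta f$ are, respectively, a Cauchy transform of $f_2$, a Cauchy transform of $f_1$, and an iterate of two Cauchy transforms of $\sd(f)$, in each case with the $\zeta_2$-integration restricted to $\D^*\setminus A_\delta=\{0<|\zeta_2|\le\delta\}$; one, respectively two, applications of Lemma \ref{lem135} bound the $L^p$-norms by $C\|f_2\|_{L^p(\D\times(\D^*\setminus A_\delta))}$, $C\|f_1\|_{L^p(\D\times(\D^*\setminus A_\delta))}$, and $C\|\sd(f)\|_{L^p(\D\times(\D^*\setminus A_\delta))}$, all of which tend to $0$ since $\D^*\setminus A_\delta$ shrinks. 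On $\D\times(\D^*\setminus A_\delta)$, where $T_\delta f=0$, one has $\|Tf\|_{L^p(\D\times(\D^*\setminus A_\delta))}\to0$ by absolute continuity of the integral, as $Tf\in L^p$ and the measure of $\D\times(\D^*\setminus A_\delta)$ tends to $0$. This step is routine given Lemma \ref{lem135}.

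Finally, to pass the equation to the limit: for $\phi\in C_c^\infty(\D\times\D^*)$ choose $\delta_0>0$ with $\operatorname{supp}\phi\subset\D\times A_{\delta_0}$; for every $0<\delta<\delta_0$ one has $A_{\delta_0}\subset A_\delta$, so $\operatorname{supp}\phi\subset\D\times A_\delta$ and, since $T_\delta f$ weakly solves the equation there, $(T_\delta f,\bar\partial^*\phi)_{\D\times\D^*}=(T_\delta f,\bar\partial^*\phi)_{\D\times A_\delta}=(\dbar T_\delta f,\phi)_{\D\times A_\delta}=(f,\phi)_{\D\times\D^*}$. Letting $\delta\to0^+$, using $T_\delta f\to Tf$ in $L^p$ and that $\bar\partial^*\phi$ is smooth with compact support (hence in $L^{p'}$), gives $(Tf,\bar\partial^*\phi)_{\D\times\D^*}=(f,\phi)_{\D\times\D^*}$ for all such $\phi$, i.e.\ $\dbar(Tf)=f$ weakly on $\D\times\D^*$. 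The only substantive issue throughout is the $\delta$-uniformity of $C$, which is exactly why it matters that the constant in Lemma \ref{lem135} is governed solely by the diameter of the domain.
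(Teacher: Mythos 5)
Your proposal is correct and follows essentially the same route as the paper, which simply invokes the proof of Theorem \ref{Lpwithdbarf} on $\D\times A(0;1,\delta)$ and lets $\delta\to0^+$; you have merely written out the details (uniformity of the constant in Lemma \ref{lem135} via the diameter, $L^p$ convergence of $T_\delta f$ to $Tf$, and passage to the limit in the weak formulation) that the paper leaves implicit.
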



\section{$f\in L^p$ is not sufficient for existence of $Tf$}\label{S:necessity}
The assumption $\left\|\sd(f)\right\|_p<\infty$ is part of the hypotheses in Theorem \ref{Lpwithdbarf} and Corollary \ref{C:mainProduct} via the condition $f\in\cb$. Under this hypothesis and $f\in L^p(D_1\times D_2)$,
these results imply $Tf$ exists and belongs to $L^p$.

In this section we show that only assuming $f\in L^p(D_1\times D_2)$ is not enough to conclude that $Tf\in L^p(D_1\times D_2)$ for $1\leq p <2$ in general. Thus on product domains, estimates on the data beyond $f\in L^p$
are generally needed for the Henkin solution to belong to $L^p$, unlike the situation for the Henkin solution on strongly pseudoconvex domains,  \cite{Kerzman71, Ovrelid}. 

 Consider the $\dbar$-equation on the bidisc $\D^2$. For each $k=1,2,\dots$, let $f^k=f^k_1d\bar z_1+f^k_2d\bar z_2$ on $\D^2$, where
\[
\begin{array}{ccc}
\displaystyle f^k_1(z_1,z_2)=\bar z_1^{k-1}z^k_1 \bar z_2^kz_2^k & \text{and} & \displaystyle f^k_2(z_1,z_2)=\bar z_1^kz_1^k \bar z_2^{k-1}z_2^k.
\end{array}
\]
Note each $f^k$ is $\bar\partial$-closed. Moreover, direct computation shows
\begin{align}\label{E:fL1}
\|f_1^k\|_{L^1(\D^2)}=\int_{\D^2}|\bar z_1^{k-1}z^k_1 \bar z_2^kz_2^k|\,dV(z)&=O\left(\frac{1}{k^2}\right) \notag\\
\|f_2^k\|_{L^1(\D^2)}=\int_{\D^2}|\bar z_1^kz_1^k \bar z_2^{k-1}z_2^k|\,dV(z)&=O\left(\frac{1}{k^2}\right).
\end{align}

An elementary calculation will be used to compute $T(f^k)$.

\begin{lemma}
\label{Cauchytrans}
For $z\in\D$ and $k\in\Z^+$, 
\[
\int_{\D}\frac{\bar\zeta^{k-1}\zeta^k\,d\bar\zeta\wedge d\zeta}{\zeta-z}=\frac{2\pi i}{k}(1-\bar z^k z^k).
\]
\end{lemma}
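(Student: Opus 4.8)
The plan is to evaluate the integral $\int_{\D}\frac{\bar\zeta^{k-1}\zeta^k}{\zeta-z}\,d\bar\zeta\wedge d\zeta$ by expanding the Cauchy kernel and integrating term by term in polar coordinates. First I would record the orientation convention: since $d\bar\zeta\wedge d\zeta = 2i\,dA(\zeta)$ (with $dA$ Lebesgue measure on $\C$), the claimed identity is equivalent to $\int_{\D}\frac{\bar\zeta^{k-1}\zeta^k}{\zeta-z}\,dA(\zeta) = \frac{\pi}{k}(1-\bar z^k z^k)$, so the real content is this area-integral evaluation.

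The key computational step is to split the disc $\D$ into the annular region $|z|<|\zeta|<1$ and the disc $|\zeta|<|z|$. On $\{|\zeta|>|z|\}$ write $\frac{1}{\zeta-z}=\frac{1}{\zeta}\sum_{m\ge0}(z/\zeta)^m$, and on $\{|\zeta|<|z|\}$ write $\frac{1}{\zeta-z}=\frac{-1}{z}\sum_{m\ge0}(\zeta/z)^m$. In polar coordinates $\zeta=re^{i\theta}$, the angular integral $\int_0^{2\pi}\bar\zeta^{k-1}\zeta^k \zeta^{-m-1}\,d\theta$ picks out only the term with a single surviving power of $e^{i\theta}$: from $\bar\zeta^{k-1}\zeta^{k-m-1}=r^{2k-m-2}e^{i(k-m-1-(k-1))\theta}=r^{2k-m-2}e^{-im\theta}$, no wait — one must be careful: $\bar\zeta^{k-1}\zeta^{k-m-1}$ has angular part $e^{-i(k-1)\theta}e^{i(k-m-1)\theta}=e^{-im\theta}$, so the angular integral over the outer region vanishes unless $m=0$, and then the series on $\{|\zeta|>|z|\}$ contributes only $\int_{|z|<r<1}\bar\zeta^{k-1}\zeta^{k-1}\,2\pi\,r\,dr = 2\pi\int_{|z|}^1 r^{2k-2}r\,dr$. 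For the inner region $\{|\zeta|<|z|\}$, the relevant angular integral is $\int_0^{2\pi}\bar\zeta^{k-1}\zeta^k\zeta^m\,d\theta$, whose angular part is $e^{-i(k-1)\theta}e^{i(k+m)\theta}=e^{i(m+1)\theta}$, which never integrates to a nonzero value for $m\ge0$; hence the inner region contributes $0$. So the whole integral reduces to $2\pi\int_{|z|}^1 r^{2k-1}\,dr = 2\pi\cdot\frac{1-|z|^{2k}}{2k}=\frac{\pi}{k}(1-|z|^{2k})=\frac{\pi}{k}(1-\bar z^k z^k)$, and multiplying by the factor $2i$ from $d\bar\zeta\wedge d\zeta$ gives the stated $\frac{2\pi i}{k}(1-\bar z^k z^k)$.

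I would present this cleanly by either (a) invoking the standard fact that the Cauchy transform reproduces holomorphic functions, i.e. $\frac{1}{2\pi i}\int_{\D}\frac{g(\zeta)}{\zeta-z}\,d\bar\zeta\wedge d\zeta$ relates to $\frac{1}{\pi}\int_\D \frac{g}{\zeta-z}dA$ and hence to solving $\dbar u = \bar\zeta^{k-1}\zeta^k$; note $u(\zeta)=\frac{1}{k}\bar\zeta^k\zeta^k$ satisfies $\frac{\partial u}{\partial\bar\zeta}=\bar\zeta^{k-1}\zeta^k$, then using the Cauchy–Pompeiu formula on $\D$ to write $u(z) = \frac{1}{2\pi i}\int_{b\D}\frac{u(\zeta)}{\zeta-z}d\zeta - \frac{1}{2\pi i}\int_\D \frac{\partial u/\partial\bar\zeta}{\zeta-z}d\bar\zeta\wedge d\zeta$; or (b) just doing the polar-coordinate series computation above directly. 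Approach (a) is shorter: the boundary integral $\frac{1}{2\pi i}\int_{|\zeta|=1}\frac{k^{-1}\bar\zeta^k\zeta^k}{\zeta-z}d\zeta = \frac{1}{2\pi i}\int_{|\zeta|=1}\frac{k^{-1}}{\zeta-z}d\zeta = \frac{1}{k}$ (since $\bar\zeta^k\zeta^k=1$ on $b\D$), which yields $\frac{1}{2\pi i}\int_\D\frac{\bar\zeta^{k-1}\zeta^k}{\zeta-z}d\bar\zeta\wedge d\zeta = \frac{1}{k} - \frac{1}{k}\bar z^k z^k$, i.e. exactly the claim after clearing the $\frac{1}{2\pi i}$.

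There is no serious obstacle here; the only point requiring a little care is bookkeeping of the orientation/constant in $d\bar\zeta\wedge d\zeta = 2i\,dA$ and the sign in the Cauchy–Pompeiu formula, so that the final constant comes out as $\frac{2\pi i}{k}$ rather than its negative or half. I would therefore state the Cauchy–Pompeiu identity with explicit orientation at the outset and let the computation of the (elementary) boundary and solid integrals finish the proof.
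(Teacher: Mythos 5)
Your proposal is correct, and your preferred route (a) is essentially the paper's own proof: the paper applies Stokes' theorem to $\omega=\frac{1}{k}\cdot\frac{\bar\zeta^k\zeta^k}{\zeta-z}\,d\zeta$ on $\D\setminus B(z;\ve)$, which is exactly the Cauchy--Pompeiu identity for the antiderivative $u=\frac{1}{k}\bar\zeta^k\zeta^k$ of $\bar\zeta^{k-1}\zeta^k$, with the boundary term $\frac{2\pi i}{k}$ coming from $|\zeta|^{2k}=1$ on $b\D$ and the term $\frac{2\pi i}{k}|z|^{2k}$ from the shrinking circle around $z$. Your alternative polar-coordinate series computation is also correct as written (including the bookkeeping $d\bar\zeta\wedge d\zeta=2i\,dA$), but it is a longer route to the same constant.
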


\begin{proof} This follows from the generalized Cauchy Integral formula. Details are provided for completeness.

Let  $\omega=\frac{1}{k}\cdot\frac{\bar\zeta^k\zeta^k}{\zeta-z}\,d\zeta\text{ on }\D\setminus B$,
where $B=B(z;\ve)$ is a disk centered at $z$ of radius $\ve$ sufficiently small so that $B\subset\D$. By Stokes' theorem, 
$\int_{\D\setminus B}d\omega=\int_{b\D}\omega-\int_{bB}\omega$. Since
$d\omega=(\partial+\dbar)\omega=\frac{\bar\zeta^{k-1}\zeta^k}{\zeta-z}\,d\bar\zeta\wedge d\zeta$,
it follows
\begin{align*}
\int_{\D}\frac{\bar\zeta^{k-1}\zeta^k\,d\bar\zeta\wedge d\zeta}{\zeta-z}
=\lim_{\ve\to0^+}\int_{\D\setminus B}d\omega
=\lim_{\ve\to0^+}\left(\int_{b\D}\frac{1}{k}\cdot\frac{\bar\zeta^k\zeta^k}{\zeta-z}\,d\zeta-\int_{bB}\frac{1}{k}\cdot\frac{\bar\zeta^k\zeta^k}{\zeta-z}\,d\zeta\right).
\end{align*}

By the Cauchy integral formula, the first term on the right hand side is
\[
\int_{b\D}\frac{1}{k}\cdot\frac{\bar\zeta^k\zeta^k}{\zeta-z}\,d\zeta=\frac{1}{k}\int_{b\D}\frac{d\zeta}{\zeta-z}=\frac{2\pi i}{k}.
\]
Writing $\zeta=z+\ve e^{i\theta}$ on $bB$, the second term becomes
\[
\lim_{\ve\to0^+}\int_{bB}\frac{1}{k}\cdot\frac{\bar\zeta^k\zeta^k}{\zeta-z}\,d\zeta=\lim_{\ve\to0^+}\int_0^{2\pi}\frac{1}{k}\cdot\frac{|z+\ve e^{i\theta}|^{2k}\ve e^{i\theta}i\,d\theta}{\ve e^{i\theta}}=\frac{2\pi i}{k}|z|^{2k}.
\]
The conclusion follows by combining these terms.
\end{proof}

Compute $T(f^k)$ using the explicit expression $\eqref{derivativeT1}$. For the first term, 
\begin{align*}
\frac{-1}{2\pi i}\int_{\D}\frac{f^k_2(z_1,\zeta_2)\,d\bar\zeta_2\wedge d\zeta_2}{\zeta_2-z_2}
&=\frac{-1}{2\pi i}\int_{\D}\frac{\bar z_1^k z_1^k \bar \zeta_2^{k-1} \zeta_2^k\, d\bar \zeta_2\wedge d\zeta_2}{\zeta_2-z_2}
=\frac{-1}{2\pi i}\cdot\bar z_1^k z_1^k\cdot\int_{\D}\frac{\bar\zeta_2^{k-1}\zeta_2^k\,d\bar\zeta_2\wedge d\zeta_2}{\zeta_2-z_2}\\
&=\frac{-1}{2\pi i}\cdot\bar z_1^k z_1^k\cdot\frac{2\pi i}{k}(1-\bar z_2^k z_2^k) =\frac{1}{k}|z_1z_2|^{2k}-\frac{1}{k}|z_1|^{2k}.
\end{align*}
The third equality follows from Lemma \ref{Cauchytrans}. Similarly, the second term on the right hand side in \eqref{derivativeT1} is
\begin{equation*}
\frac{-1}{2\pi i}\int_{\D}\frac{f^k_1(\zeta_1,z_2)\,d\bar\zeta_1\wedge d\zeta_1}{\zeta_1-z_1}=\frac{1}{k}|z_1z_2|^{2k}-\frac{1}{k}|z_2|^{2k}.
\end{equation*}
For the last term in \eqref{derivativeT1}, separate the variables in the integral and apply Lemma \ref{Cauchytrans} twice to get
\begin{align*}
\frac{-1}{(2\pi i)^2}\int_{\D^2}\frac{\sd(f^k)(\zeta_1,\zeta_2)\,d\bar\zeta_1\wedge d\zeta_1\wedge d\bar\zeta_2\wedge d\zeta_2}{(\zeta_1-z_1)(\zeta_2-z_2)}
&=\frac{-1}{(2\pi i)^2}\int_{\D^2}\frac{k\bar \zeta_1^{k-1}\zeta_1^k \bar \zeta_2^{k-1}\zeta_2^k\,d\bar\zeta_1\wedge d\zeta_1\wedge d\bar\zeta_2\wedge d\zeta_2}{(\zeta_1-z_1)(\zeta_2-z_2)}\\
&=\frac{-k}{(2\pi i)^2}\int_{\D}\frac{\bar\zeta_1^{k-1}\zeta_1^k\,d\bar\zeta_1\wedge d\zeta_1}{\zeta_1-z_1}\int_{\D}\frac{\bar\zeta_2^{k-1}\zeta_2^k\,d\bar\zeta_2\wedge d\zeta_2}{\zeta_2-z_2}\\
&=\frac{-k}{(2\pi i)^2}\cdot\frac{2\pi i}{k}(1-\bar z_1^k z_1^k)\cdot\frac{2\pi i}{k}(1-\bar z_2^k z_2^k)\\
&=-\frac{1}{k}+\frac{1}{k}|z_1|^{2k}+\frac{1}{k}|z_2|^{2k}-\frac{1}{k}|z_1z_2|^{2k}.
\end{align*}
Therefore,
\begin{equation}\label{E:TfnotL1}
T(f^k)=\frac{1}{k}|z_1z_2|^{2k}-\frac{1}{k}.
\end{equation}

Now define, for each $L\in\Z^+$,  $g^L=g^L_1d\bar z_1+g^L_2d\bar z_2$ with $g^L_1=\sum_{k=1}^L f^k_1$ and $g^L_2=\sum_{k=1}^L f^k_2$. Clearly each $\dbar g^L=0$. Since $\sum \frac 1{k^2}<\infty$, \eqref{E:fL1} implies $g^L\in L^1_{0,1}\left(\D^2\right)$ with $L^1$ norm bounded independent of $L$.
However \eqref{E:TfnotL1} implies
\begin{align*}
T\left(g^L\right)(z_1,z_2)= \sum_{k=1}^L T\left(f^k\right) 
=\frac{|z_1z_2|\left(1-|z_1z_2|^{2L}\right)}{1-|z_1z_2|^2}-\sum_{k=1}^L\frac{1}{k}.
\end{align*}
If $K\subset \D^2$ is a compact set and  $(z_1,z_2)\in K$, the last expression tends to $-\infty$ as $L\to\infty$, by divergence of the harmonic series. 
Thus there does not exist a constant $C$ such that $\left\|T\left(g^L\right)\right\|_{L^1}\leq C\left\|g^L\right\|_{L^1}$ for all $L$.

\begin{remark} 
Taking the full sums,  $g=g_1d\bar z_1+g_2d\bar z_2$ with $g_1=\sum_{k=1}^\infty f^k_1$ and $g_2=\sum_{k=1}^\infty f^k_2$,  gives an example where $Tg$ does not even exist. In this case, \eqref{E:fL1} still shows $g\in L^1_{0,1}\left(\D^2\right)$, while the analogue of the above computation yields
$$T(g)(z_1,z_2)=-\ln(1-|z_1z_2|^2)-\sum_{k=1}^{\infty}\frac{1}{k}\equiv\infty.$$
\end{remark}

\begin{remark}
A careful inspection of the integrals shows that $g\in L^p\left(\D^2\right)$ for $1\leq p <2$; details are left to the interested reader. Thus for the $L^1$ problem, ``over-prescribing'' integrability by requiring $g\in L^p$ for $p<2$ is still not sufficient to guarantee $Tg\in L^1$.
\end{remark}


\section{Non-canonical Solution}

The solution $u=T(f)$ in \eqref{derivativeT1} is compared with the $L^2$-minimal solution $u_{\text{can}}$ on $D_1\times D_2$. The first observation is that $u=T(f)\neq u_{\text{can}}$  on $\D^2$.

Let $h\in C^1(\overline{\D})$ be a holomorphic function on $\D$ and let
\begin{equation}
\label{noncansol}
f=z_1^kh(z_2)d\bar z_1
\end{equation}
for some positive integer $k$. It is easily checked that $\dbar f=0$. Since $f_2=0$, \eqref{derivativeT1} becomes
\begin{equation*}
u=T(f)=\frac{-1}{2\pi i}\int_{\D}\frac{f_1(\zeta_1,z_2)\,d\bar\zeta_1\wedge d\zeta_1}{\zeta_1-z_1}=\frac{-1}{2\pi i}\cdot h(z_2)\int_{\D}\frac{\zeta_1^k\,d\bar\zeta_1\wedge d\zeta_1}{\zeta_1-z_1}.
\end{equation*}

Let $\omega=\zeta_1^k\bar\zeta_1/(\zeta_1-z_1)$ and apply Stokes theorem to the integral
$\int_{\D\setminus B}d\omega$,
where $B=B(z_1,\ve)$ is the disk centered at $z_1$ of radius $\ve$ for some $\ve>0$ sufficiently small. It follows that 
\begin{equation}
\label{T1example}
\frac{1}{2\pi i}\int_{\D}\frac{\zeta_1^k\,d\bar\zeta_1\wedge d\zeta_1}{\zeta_1-z_1}=\lim_{\ve\to0^+}\left(\frac{1}{2\pi i}\int_{b\D}\frac{\zeta_1^k\bar\zeta_1\,d\zeta_1}{\zeta_1-z_1}-\frac{1}{2\pi i}\int_{bB}\frac{\zeta_1^k\bar\zeta_1\,d\zeta_1}{\zeta_1-z_1}\right).
\end{equation}
By the Cauchy integral formula, the first term in \eqref{T1example} is $z_1^{k-1}$. For the second term of \eqref{T1example}, write $\zeta_1=z_1+\ve e^{i\theta}$ and note
\[
\lim_{\ve\to0^+}\frac{1}{2\pi i}\int_{bB}\frac{\zeta_1^k\bar\zeta_1\,d\zeta_1}{\zeta_1-z_1}=z_1^k\bar z_1.
\]
Thus, $u=h(z_2)\left(z_1^k\bar z_1-z_1^{k-1}\right)$.

But the $L^2$-minimal solution of $\bar\partial u=f$ is
\begin{equation*}
u_{\text{can}}=h(z_2)\left(z_1^k\bar z_1-\frac{k}{k+1}z_1^{k-1}\right),
\end{equation*}
since it is easy to verify
\[
\langle u_{\text{can}}\,,\,z_1^mz_2^n\rangle=\int_{\D\times\D}z_1^k\bar z_1h(z_2)\bar z_1^m\bar z_2^n\,dV(z)-\frac{k}{k+1}\int_{\D\times\D}z_1^{k-1}h(z_2)\bar z_1^m\bar z_2^n\,dV(z)=0
\]
for all integers $m,n\ge0$.

\begin{remark}
Let $D_1$ and $D_2$ be bounded simply connected planar domains. Then $D_1\times D_2$ is biholomorphic to $\D^2$ under a mapping $\psi_1\otimes\psi_2$, where $\psi_j$ is the biholomorphism from $D_j$ to $\D$ for $j=1,2$. Now consider the form with the same expression as in \eqref{noncansol} and let $h\equiv 1$, that is
$f=z_1^kd\bar z_1$
on $D_1\times D_2$, for some positive integer $k$. The solution operator $T$ gives the same solution
$u=z_1^k\bar z_1-z_1^{k-1}$
as above.

However, to obtain the $L^2$-minimal solution, one must transfer the orthonormal basis on $A^2(\D^2)$ to one on $A^2(D_1\times D_2)$. Thus the expression of $u_{\text{can}}$ necessarily involves $\psi'_1$ and $\psi'_2$, unlike the expression of $u=T(f)$.
\end{remark}


\section{The Hartogs Triangle}\label{S:Hartogs}

In this section, consider the $\dbar$-equation on the Hartogs triangle $\h$:
\begin{equation*}
\h=\left\{(z_1, z_2)\in\C^2: \left|z_1\right| <\left|z_2\right| <1\right\}.
\end{equation*} 
The first step is to transfer the equation on $\h$ to the product space $\D\times\D^*$.
\subsection{Transform the $\dbar$-equation}

Let 
\[
\phi: \h\to\D\times\D^*
\]
\[
\phi(z_1,z_2)=(z_1/z_2,z_2)=(w_1,w_2)
\]
be the usual biholomorphism. Consider
\begin{equation}
\label{dbaronh}
\dbar_zv=\alpha=\alpha_1\,d\bar z_1+\alpha_2\,d\bar z_2
\end{equation}
on $\h$, where $\alpha$ is $\dbar$-closed. Using the chain rule
\[
\left\{
\begin{array}{r}
d\bar z_1=\frac{\partial\bar z_1}{\partial \bar w_1}\,d\bar w_1+\frac{\partial\bar z_1}{\partial \bar w_2}\,d\bar w_2 \\
d\bar z_2=\frac{\partial\bar z_2}{\partial \bar w_1}\,d\bar w_1+\frac{\partial\bar z_2}{\partial \bar w_2}\,d\bar w_2
\end{array}
\right.,
\]
it follows that equation \eqref{dbaronh} is equivalent to
\begin{equation}
\label{dbarondd}
\begin{split}
\dbar_w u
&=\Big(\tilde \alpha_1\frac{\partial \bar z_1}{\partial \bar w_1}+\tilde \alpha_2\frac{\partial \bar z_2}{\partial \bar w_1}\Big)\,d\bar w_1+\Big(\tilde \alpha_1\frac{\partial \bar z_1}{\partial \bar w_2}+\tilde \alpha_2\frac{\partial \bar z_2}{\partial \bar w_2}\Big)\,d\bar w_2\\
&=\bar w_2\cdot\tilde \alpha_1\,d\bar w_1+(\bar w_1\cdot\tilde \alpha_1+\tilde \alpha_2)\,d\bar w_2\\
&=f_1\,d\bar w_1+f_2\,d\bar w_2\\
&=f
\end{split}
\end{equation}
on $\D\times\D^*$, where $u=v\circ\phi^{-1}$ and $\tilde \alpha_j=\alpha_j\circ\phi^{-1}$ for $j=1,2$. 

Note that
\[
\frac{\partial f_1}{\partial \bar w_2}=\frac{\partial }{\partial \bar w_2}(\bar w_2\cdot\tilde \alpha_1)=\tilde \alpha_1+\bar w_2\bar w_1\frac{\partial \alpha_1}{\partial \bar z_1}+\bar w_2\frac{\partial \alpha_1}{\partial \bar z_2}
\]
and
\[
\frac{\partial f_2}{\partial \bar w_1}=\frac{\partial }{\partial \bar w_1}(\bar w_1\cdot\tilde \alpha_1+\tilde \alpha_2)=\tilde \alpha_1+\bar w_1\bar w_2\frac{\partial \alpha_1}{\partial \bar z_1}+\bar w_2\frac{\partial \alpha_2}{\partial \bar z_1}.
\]
Since $\alpha$ is $\dbar$-closed on $\h$, it follows that $f$ is $\dbar$-closed on $\D\times\D^*$.

\subsection{An $L^p$ assumption on $\h$}

Based on the transformation \eqref{dbarondd}, a vanishing condition on $\alpha_j$ at the origin, 
\[
\|\alpha_j\|^p_{L^p_{-2}(\h)}=\int_{\h}|\alpha_j|^p|z_2|^{-2}\,dV(z)<\infty
\]
for $j=1,2$, implies that $f_1,f_2\in L^p(\D\times\D^*)$. This follows since
\[
\int_{\D\times\D^*}|\tilde \alpha_1|^p\cdot|w_2|^p\,dV(w) \le \int_{\D\times\D^*}|\tilde \alpha_1|^p\,dV(w)=\|\alpha_1\|^p_{L^p_{-2}(\h)}
\]
and
\[
\int_{\D\times\D^*}|\bar w_1\cdot\tilde \alpha_1+\tilde \alpha_2|^p\,dV(w) \le C_p\int_{\D\times\D^*}|\tilde \alpha_1|^p + |\tilde \alpha_2|^p\,dV(w)=C_p\left(\|\alpha_1\|^p_{L^p_{-2}(\h)}+\|\alpha_2\|^p_{L^p_{-2}(\h)}\right).
\]
In addition, a vanishing condition on derivatives of $\alpha_1$ at the origin, 

\[
\left\|\frac{\partial \alpha_1}{\partial \bar z_j}\right\|^p_{L^p_{-1}(\h)}=\int_{\h}\left|\frac{\partial \alpha_1}{\partial \bar z_j}\right|^p\cdot |z_2|^{-1}\,dV(z)<\infty
\]
for $j=1,2$, implies that $\sd f \in L^p(\D\times\D^*)$. This follows since for $1\le p<\infty$
\begin{align*}
\int_{\D\times\D^*}\left| \tilde \alpha_1+\bar w_1\bar w_2\frac{\partial \alpha_1}{\partial \bar z_1}+\bar w_2\frac{\partial \alpha_1}{\partial \bar z_2} \right|^p
&\le C_p\int_{\D\times\D^*}|\tilde \alpha_1|^p+|w_2|\cdot\left|\frac{\partial \alpha_1}{\partial \bar z_1}\right|^p+|w_2|\cdot \left|\frac{\partial \alpha_1}{\partial \bar z_2}\right|^p\\
\le C_p&\left(\|\alpha_1\|^p_{L^p_{-2}(\h)}+\left\|\frac{\partial \alpha_1}{\partial \bar z_1}\right\|^p_{L^p_{-1}(\h)}+\left\|\frac{\partial \alpha_1}{\partial \bar z_2}\right\|^p_{L^p_{-1}(\h)}\right).
\end{align*}

Thus, the following $L^p$ estimate for a solution of $\dbar$ on $\h$ holds.

\begin{theorem}
\label{HarLp'}
Let $v$, $\alpha$ be as in \eqref{dbaronh} and $u$, $f$ be as in \eqref{dbarondd}. Suppose $\alpha$ is $\dbar$-closed in the weak sense on $\h$. For $1\le p<\infty$, assume that
\begin{enumerate}
\item $\alpha_1,\alpha_2\in L^p_{-2}(\h)$,
\item $\partial \alpha_2/\partial \bar z_1=\partial \alpha_1/\partial \bar z_2\in L^p_{-1}(\h)$ and $\partial \alpha_1/\partial \bar z_1\in L^p_{-1}(\h)$.
\end{enumerate}
Then there is a weak solution $v=u\circ\phi=T(f)\circ\phi$, where $T$ is the solution operator in \eqref{T1forDxD*}, satisfying the $L^p$ estimate
\[
\|v\|_{L^p(\h)}\le C\left(\sum_{j=1}^2\|\alpha_j\|_{L^p_{-2}(\h)}+\sum_{j=1}^2\|\partial \alpha_1/\partial \bar z_j\|_{L^p_{-1}(\h)}\right)
\]
for some constant $C>0$ independent of $\alpha$.
\end{theorem}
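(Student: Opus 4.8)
The plan is to assemble Theorem \ref{HarLp'} from Corollary \ref{C:mainProduct} by pulling back everything through the biholomorphism $\phi$. First I would verify that the hypotheses on $\alpha$ translate into the hypothesis $f\in\cb$ for the transformed form $f$ on $\D\times\D^*$. This is exactly the content of the two displayed estimate-blocks preceding the theorem: the bound $\|f_1\|_{L^p}+\|f_2\|_{L^p}\le C\big(\|\alpha_1\|_{L^p_{-2}(\h)}+\|\alpha_2\|_{L^p_{-2}(\h)}\big)$ comes from $f_1=\bar w_2\tilde\alpha_1$, $f_2=\bar w_1\tilde\alpha_1+\tilde\alpha_2$ together with $|w_2|\le 1$ and the change-of-variables identity $dV(w)=|z_2|^{-2}dV(z)$ under $\phi$ (the Jacobian of $\phi$ is $1/z_2$, so $|\det|^2=|z_2|^{-2}$); and the bound for $\|\sd f\|_{L^p}$ comes from the computed formula $\sd f=\tilde\alpha_1+\bar w_1\bar w_2\,\partial\alpha_1/\partial\bar z_1+\bar w_2\,\partial\alpha_1/\partial\bar z_2$ (using $\dbar\alpha=0$ so that $\partial\alpha_2/\partial\bar z_1=\partial\alpha_1/\partial\bar z_2$), again changing variables and absorbing one factor of $|z_2|$ into the weight to pass from $L^p_{-2}$ to $L^p_{-1}$. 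I would also note $f$ is $\dbar$-closed in the weak sense on $\D\times\D^*$, which was already checked in Section \ref{S:Hartogs}.

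With $f\in\cb$ established, Corollary \ref{C:mainProduct} immediately gives a weak solution $u=T(f)$ of $\dbar_w u=f$ on $\D\times\D^*$ with $\|u\|_{L^p(\D\times\D^*)}\le C\|f\|_{\cb}$. The next step is to transport the solution and the estimate back to $\h$. Setting $v=u\circ\phi$, the chain-rule computation in \eqref{dbarondd} run in reverse shows $\dbar_z v=\alpha$ weakly on $\h$; since $\phi$ is biholomorphic, weak solutions pull back to weak solutions (test functions on $\h$ correspond to test functions on $\D\times\D^*$ via $\phi$, modulo the smooth, nonvanishing Jacobian factor). For the norm, change variables once more: $\|v\|_{L^p(\h)}^p=\int_{\h}|u\circ\phi|^p\,dV(z)=\int_{\D\times\D^*}|u(w)|^p\,|z_2|^{2}\,dV(w)\le\int_{\D\times\D^*}|u|^p\,dV(w)=\|u\|_{L^p(\D\times\D^*)}^p$, using $|z_2|=|w_2|\le 1$. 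Chaining the three inequalities $\|v\|_{L^p(\h)}\le\|u\|_{L^p(\D\times\D^*)}\le C\|f\|_{\cb}\le C\big(\sum_j\|\alpha_j\|_{L^p_{-2}(\h)}+\sum_j\|\partial\alpha_1/\partial\bar z_j\|_{L^p_{-1}(\h)}\big)$ yields the stated estimate.

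The only genuinely delicate point is bookkeeping the change of variables and the direction of the weight inequalities: one must check that $|w_2|\le 1$ is used to \emph{drop} a favorable factor in the direction $\h\to\D\times\D^*$ for the data and in the direction $\D\times\D^*\to\h$ for the solution, and that the single extra factor of $|w_2|$ appearing in the $\sd f$ estimate is precisely what converts the weight exponent $-2$ into $-1$ for the derivative terms. A secondary technical point is justifying that the weak-solution property survives pullback by $\phi$; this is routine because $\phi$ is a biholomorphism between the two domains (no boundary behavior is involved, as everything is tested against $C_c^\infty$ functions in the interior). Neither step presents a real obstacle — the substantive analytic work was already done in proving Theorem \ref{Lpwithdbarf} and Corollary \ref{C:mainProduct}; Theorem \ref{HarLp'} is essentially a transfer-of-structure corollary.
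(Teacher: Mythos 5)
Your proposal is correct and follows the paper's argument essentially verbatim: the displayed estimates preceding the theorem are exactly the verification that the weighted hypotheses on $\alpha$ give $f\in\cb$ (with the single factor of $|w_2|$ converting the weight $-2$ to $-1$ on the derivative terms), and the theorem then follows by applying Corollary \ref{C:mainProduct} and changing variables back, dropping the favorable Jacobian factor $|w_2|^2\le 1$. Nothing to add.
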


\subsection{An example}\label{SS:example} The $L^p$ estimates of the solution $v$ given in Theorem \ref{HarLp'}, and the cannonical solution $v_{\text{can}}$ on $\h$ can be compared via the simple example
\[
\alpha=d\bar z_2.
\]
Verifying that $\alpha$ satisfies the conditions in Theorem \ref{HarLp'} is easy. Thus the solution $v$ in Theorem \ref{HarLp'} belongs to $L^p(\h)$ for $1\le p<\infty$.

On the other hand, we claim the $L^2$-minimal solution of $\dbar v= \alpha$ is
\[
v_{\text{can}}=\bar z_2-cz_2^{-1}
\]
for some nonzero constant $c$. Clearly $\dbar v_{\text{can}}=\alpha$. To see that $v_{\text{can}}$ is orthogonal to holomorphic functions on $\h$, it suffices to take its inner product with the orthogonal basis $\{z_1^nz_2^m\}$ on $\h$, for so-called allowable indices $(n,m)\in \Z^+\times\Z$. See Sections   of \cite{EdhMcN17} for the definition of allowable indices and Section 5 of that paper of that paper for a proof that $\left\langle v_{\text{can}}, z^\alpha\right\rangle =0$ for all allowable exponents $\alpha$.
Note that $\alpha\in L^p(\h)$. On the other hand, the proof of Proposition 5.5 in \cite{EdhMcN17} implies that  $v_{\text{can}}\notin L^p(\h)$ for $p\ge4$. Thus, $v$  behaves better than $v_{\text{can}}$ in terms of $L^p$ regularity. 

At the operator level, it follows that the canonical solution operator for $\dbar$ on $\h$ doesn't map $L^p$ $\dbar$-closed $(0,1)$-form to $L^p$ functions for $p\ge4$. This is consistent with results on the Bergman projection on $\h$, see \cite{EdhMcN16}, \cite{ChaZey16}, and \cite{Chen17}.

\subsection{Extra condition}\label{SS:extra}

An extra condition on $\alpha$, namely
\begin{equation}
\label{f2=0}
\bar z_1\cdot\alpha_1+\bar z_2\cdot\alpha_2=0,
\end{equation}
and \eqref{dbarondd} shows that $f=f_1d\bar w_1$ on $\D\times\D^*$. By \eqref{T1forDxD*}, $T(f)$ only involves the second term. Thus a better $L^p$ estimate holds in this case:

\begin{theorem}
Let $v$, $\alpha$ be as in \eqref{dbaronh} and $u$, $f$ be as in \eqref{dbarondd}. Suppose $\alpha$ is $\dbar$-closed in the weak sense on $\h$ and satisfies \eqref{f2=0}. For $1\le p<\infty$, assume that $\alpha_1\in L^p(\h)$. Then there is a weak solution $v=u\circ\phi=T(f)\circ\phi$, where $T$ is the solution operator in \eqref{T1forDxD*}, satisfying the $L^p$ estimate
\[
\|v\|_{L^p(\h)}\le C\|\alpha_1\|_{L^p(\h)}
\]
for a constant $C>0$ independent of $\alpha$.
\end{theorem}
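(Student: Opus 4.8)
The plan is to reduce everything to the already-established machinery for $\D\times\D^*$, namely Corollary \ref{C:mainProduct}, together with the change-of-variables computation \eqref{dbarondd} and the extra algebraic cancellation forced by \eqref{f2=0}. First I would recall from \eqref{dbarondd} that the transformed data is $f = \bar w_2\,\tilde\alpha_1\,d\bar w_1 + (\bar w_1\,\tilde\alpha_1 + \tilde\alpha_2)\,d\bar w_2$, so that the hypothesis \eqref{f2=0}, written in $w$-coordinates as $\bar w_1 w_2\,\tilde\alpha_1 + \bar w_2\,\tilde\alpha_2 = 0$, i.e. $\bar w_1\,\tilde\alpha_1 + \tilde\alpha_2 = 0$ (dividing by $\bar w_2$, which is harmless since $w_2 \ne 0$ on $\D^*$), gives exactly $f_2 = 0$. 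Hence $f = f_1\,d\bar w_1$ with $f_1 = \bar w_2\,\tilde\alpha_1$ on $\D\times\D^*$, and $f$ is $\dbar$-closed in the weak sense by the computation at the end of Section \ref{S:Hartogs}.

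Next I would check that $f\in\cb$ for this special $f$, but note that when $f_2\equiv 0$ we have $\sd(f) = \tfrac12\,\partial f_1/\partial\bar w_2$ — however, the key point is that in formula \eqref{T1forDxD*} with $f_2 = 0$ only the \emph{second} integral survives: $T(f) = \frac{-1}{2\pi i}\int_{\D}\frac{f_1(\zeta_1,z_2)}{\zeta_1-z_1}\,d\bar\zeta_1\wedge d\zeta_1$, because the first and third integrals both involve $f_2$ or $\sd(f)$, and the third term must be revisited. Actually the cleanest route: once $f_2 = 0$, run the mollifier/limit argument of Theorem \ref{Lpwithdbarf} (as extended to $\D\times\D^*$ in Corollary \ref{C:mainProduct}) applied \emph{only} to the single surviving operator, whose boundedness is governed by Lemma \ref{lem135} with $g = f_1$ alone. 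Concretely, Lemma \ref{lem135} gives $\big\|\frac{-1}{2\pi i}\int_{\D}\frac{f_1(\zeta_1,z_2)}{\zeta_1-z_1}\,d\bar\zeta_1\wedge d\zeta_1\big\|_{L^p(\D\times\D^*)} \le C\|f_1\|_{L^p(\D\times\D^*)}$, and since $|f_1| = |w_2|\,|\tilde\alpha_1| \le |\tilde\alpha_1|$ on $\D\times\D^*$, this is $\le C\|\tilde\alpha_1\|_{L^p(\D\times\D^*)}$. Finally, by the (Jacobian-free in $L^p$, or rather: accounting for the Jacobian of $\phi$) change of variables $w = \phi(z)$, $\|\tilde\alpha_1\|_{L^p(\D\times\D^*)}$ is comparable to $\|\alpha_1\|_{L^p(\h)}$ — here one should track that $|\det J_\phi| = |z_2|^{-2}$ is bounded away from $0$ and $\infty$ only on compact subsets, so one verifies directly that $\int_{\D\times\D^*}|\tilde\alpha_1|^p\,dV(w) = \int_{\h}|\alpha_1|^p\,|z_2|^{-2}\,dV(z)$, which under the hypothesis $\alpha_1\in L^p(\h)$... wait — this is $L^p_{-2}(\h)$, not $L^p(\h)$.

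So the subtle point, and what I expect to be the main obstacle, is precisely this weight bookkeeping: naively the transform sends $\|\alpha_1\|_{L^p(\h)}$ to $\|\alpha_1\|_{L^p_{-2}(\h)}$, which is a \emph{stronger} hypothesis than claimed. The resolution must exploit the extra factor $\bar w_2$ in $f_1 = \bar w_2\,\tilde\alpha_1$: we have $\|f_1\|_{L^p(\D\times\D^*)}^p = \int_{\D\times\D^*}|w_2|^p\,|\tilde\alpha_1|^p\,dV(w)$, and pulling back via $w_2 = z_2$, $dV(w) = |z_2|^{-2}\,dV(z)$, this equals $\int_{\h}|z_2|^{p-2}\,|\alpha_1|^p\,dV(z)$. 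For $p\ge 2$ this is dominated by $\int_{\h}|\alpha_1|^p\,dV(z) = \|\alpha_1\|_{L^p(\h)}^p$ since $|z_2|<1$; for $1\le p<2$ one instead notes $|z_2|^{p-2} = |z_2|^p\,|z_2|^{-2} \le |z_2|^{-2}\cdot 1$... which reintroduces the weight. Hence I would need to either (i) restrict to $p\ge2$, or (ii) observe that on $\h$ one has $|z_1|<|z_2|$, so $|\bar w_1\,\tilde\alpha_1|\le|\tilde\alpha_1|$ really is $\le|\alpha_1\circ\phi^{-1}|$ with no weight, and more importantly redo the estimate so that the operator acts on $f_1$ in a way that absorbs the $|z_2|^{p-2}$; most likely the intended reading is that $|z_2|^{p-2}\le 1$ when $p\ge 2$ and the statement should be understood to incorporate this, or the $|z_2|^{-2}$ Jacobian is genuinely offset by the $|w_2|^p$ weight plus a further gain from integrating the Cauchy kernel in $w_1$ over $\D$ \emph{whose measure is independent of $w_2$}. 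I would carry out the computation $\|f_1\|_{L^p(\D\times\D^*)}^p = \int_{\h}|z_2|^{p-2}|\alpha_1|^p\,dV$ explicitly, then conclude $\|v\|_{L^p(\h)} = \|T(f)\circ\phi\|_{L^p(\h)}$, change variables once more (picking up another $|z_2|^{-2}$ going from $\|T(f)\|_{L^p(\D\times\D^*)}$ to $\|v\|_{L^p(\h)}$), and verify that the two weight exponents conspire — via the structure of the single Cauchy-transform operator in the $w_1$ variable, which commutes with multiplication by functions of $w_2$ — to leave exactly $\|\alpha_1\|_{L^p(\h)}$. The bulk of the write-up is this weighted change-of-variables chase; the $\dbar$-solving property of $v$ is immediate from Corollary \ref{C:mainProduct} and the chain rule already recorded in \eqref{dbarondd}.
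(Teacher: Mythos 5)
Your reduction is the same as the paper's: \eqref{f2=0} forces $f_2\equiv 0$ (and hence, by $\dbar$-closedness, $\sd(f)=\partial f_2/\partial\bar w_1=0$, so the third integral in \eqref{T1forDxD*} drops out along with the first), Lemma \ref{lem135} then gives $\|T(f)\|_{L^p(\D\times\D^*)}\le C\|f_1\|_{L^p(\D\times\D^*)}$, and the rest is weighted change of variables. Your computation $\|f_1\|^p_{L^p(\D\times\D^*)}=\int_\h|z_2|^{p-2}|\alpha_1|^p\,dV(z)$ is correct, and combined with $\|v\|^p_{L^p(\h)}=\int_{\D\times\D^*}|u|^p|w_2|^2\,dV(w)\le\|u\|^p_{L^p(\D\times\D^*)}$ (note the Jacobian picked up in this direction is $|w_2|^{2}\le1$, not $|z_2|^{-2}$ as you wrote; the direction is favorable) this settles $p\ge2$ exactly as in the paper.

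The genuine gap is the case $1\le p<2$, which you explicitly leave unresolved (``restrict to $p\ge 2$, or \dots verify that the two weight exponents conspire''). The mechanism you gesture at --- the surviving operator acts only in $w_1$ and commutes with multiplication by functions of $w_2$ --- is precisely what closes the case, but it must actually be deployed. The paper's version: since $T(w_2f)=w_2T(f)=w_2u$, Lemma \ref{lem135} applied to $w_2f_1$ gives $\int_{\D\times\D^*}|u|^p|w_2|^p\,dV(w)\le C\int_{\D\times\D^*}|\tilde\alpha_1|^p|w_2|^{2p}\,dV(w)$; for $p<2$ one has $|w_2|^2\le|w_2|^p$ on the left, while $|w_2|^{2p}\le|w_2|^2$ on the right since $2p\ge2$, so $\|v\|^p_{L^p(\h)}\le\int|u|^p|w_2|^p\le C\int|\tilde\alpha_1|^p|w_2|^2=C\|\alpha_1\|^p_{L^p(\h)}$. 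Equivalently, apply the one-variable Cauchy-transform estimate slicewise in $w_1$ for each fixed $w_2$ (the constant is independent of $w_2$) and then integrate against the weight $|w_2|^2$, giving $\int|u|^p|w_2|^2\le C\int|f_1|^p|w_2|^2=C\int|\tilde\alpha_1|^p|w_2|^{p+2}\le C\|\alpha_1\|^p_{L^p(\h)}$ for all $p\ge1$ at once. Without one of these two observations your write-up proves the theorem only for $p\ge2$.
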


\begin{proof}
Starting with \eqref{T1forDxD*} and applying Lemma \ref{lem135}, it follows  that
\begin{equation}
\label{lpboundednessondd}
\int_{\D\times\D^*}|T(f)|^p\,dV(w) \le C\int_{\D\times\D^*}|f_1|^p\,dV(w).
\end{equation}

(i) When $p \ge 2$, it holds that
\[
\int_{\D\times\D^*}|\tilde \alpha_1|^p\cdot|w_2|^p\,dV(w) \le \int_{\D\times\D^*}|\tilde \alpha_1|^p\cdot|w_2|^2\,dV(w)=\int_{\h}|\alpha_1|^p\,dV(z).
\]
Since $f_1=\bar w_2\cdot\tilde \alpha_1$ and $v=u\circ\phi=T(f)\circ\phi$, by \eqref{lpboundednessondd}, it follows
\[
\int_{\h}|v|^p\,dV(z)=\int_{\D\times\D^*}|u|^p\cdot |w_2|^2\,dV(w)\le \int_{\D\times\D^*}|T(f)|^p\,dV(w) \le C\int_{\h}|\alpha_1|^p\,dV(z).
\]

(ii) When $1 \le p<2$, consider
$\dbar_w (u \cdot w_2)=w_2\cdot\dbar_w u=w_2\cdot f=w_2\cdot f_1\,d\bar w_1$.
By \eqref{T1forDxD*},  $T(w_2\cdot f)=w_2\cdot T(f)=w_2\cdot u$. Therefore, replacing $f$ by $w_2\cdot f$ in \eqref{lpboundednessondd}, 
\[
\int_{\D\times\D^*}|u\cdot w_2|^p\,dV(w) \le C\int_{\D\times\D^*}|f_1\cdot w_2|^p\,dV(w)=C\int_{\D\times\D^*}|\tilde \alpha_1|^p\cdot|w_2|^{2p}\,dV(w).
\]
Since $1 \le p<2$, 
\[
\int_{\h}|v|^p\,dV(z)=\int_{\D\times\D^*}|u|^p\cdot |w_2|^2\,dV(w) \le \int_{\D\times\D^*}|u|^p\cdot|w_2|^p\,dV(w)
\]
and
\[
\int_{\D\times\D^*}|\tilde \alpha_1|^p\cdot|w_2|^{2p}\,dV(w) \le \int_{\D\times\D^*}|\tilde \alpha_1|^p\cdot|w_2|^2\,dV(w)=\int_{\h}|\alpha_1|^p\,dV(z).
\]
Hence, 
\[
\int_{\h}|v|^p\,dV(z) \le C\int_{\h}|\alpha_1|^p\,dV(z).
\]
\end{proof}


\bibliographystyle{alpha}
\bibliography{ChenMcNeal16}

\end{document}